%
%
%
%
\documentclass{amsart}
\usepackage{amsfonts,graphicx,rotating,ctable}
\usepackage{amssymb}

\makeatletter
\@namedef{subjclassname@2020}{\textup{2020} Mathematics Subject Classification}
\makeatother

\newtheorem{theorem}{Theorem}[section]
\newtheorem{lemma}[theorem]{Lemma}
\newtheorem{corollary}[theorem]{Corollary}
\newtheorem{proposition}[theorem]{Proposition}

\theoremstyle{definition}
\newtheorem{definition}[theorem]{Definition}
\newtheorem{example}[theorem]{Example}

\theoremstyle{remark}
\newtheorem{remark}[theorem]{Remark}

\numberwithin{equation}{section}

\newcommand{\im}{\operatorname{im}}


\makeatletter
\makeatother


\begin{document}

\title[Gr\"obner bases for oriented Grassmann manifolds $\widetilde G_{2^t,3}$]{Gr\"obner bases in the mod $2$ cohomology of oriented Grassmann manifolds $\widetilde G_{2^t,3}$}

\author{Uro\v s A.\ Colovi\'c}
\address{University of Belgrade,
  Faculty of mathematics,
  Studentski trg 16,
  Belgrade,
  Serbia}
\email{mm21033@alas.matf.bg.ac.rs}

\author{Branislav I.\ Prvulovi\'c}
\address{University of Belgrade,
  Faculty of mathematics,
  Studentski trg 16,
  Belgrade,
  Serbia}
\email{bane@matf.bg.ac.rs}

\subjclass[2020]{Primary 55R40, 13P10; Secondary 55s05}



\keywords{Grassmann manifolds, Gr\"obner bases, Steenrod squares}

\begin{abstract}
For $n$ a power of two, we give a complete description of the cohomology algebra $H^*(\widetilde G_{n,3};\mathbb Z_2)$ of the Grassmann manifold $\widetilde G_{n,3}$ of oriented $3$-planes in $\mathbb R^n$. We do this by finding a reduced Gr\"obner basis for an ideal closely related to this cohomology algebra. Using this Gr\"obner basis we also present an additive basis for $H^*(\widetilde G_{n,3};\mathbb Z_2)$.
\end{abstract}

\maketitle



\section{Introduction}
\label{intro}

There are several well-known descriptions of the mod $2$ cohomology algebra of the Grassmann manifold $G_{n,k}$ of $k$-dimensional subspaces in $\mathbb R^n$, one of them being the Borel description \cite{Borel} in terms of the Stiefel--Whitney classes of the tautological vector bundle over $G_{n,k}$. The Grassmann manifold $\widetilde G_{n,k}$ of oriented $k$-dimensional subspaces in $\mathbb R^n$ is the universal (two-fold) covering of $G_{n,k}$, with the covering map $p:\widetilde G_{n,k}\rightarrow G_{n,k}$, which "forgets" the orientation of a $k$-plane. The subalgebra $\im\big(p^*:H^*(G_{n,k};\mathbb Z_2)\rightarrow H^*(\widetilde G_{n,k};\mathbb Z_2)\big)$ of $H^*(\widetilde G_{n,k};\mathbb Z_2)$ is described as a quotient of the polynomial algebra $\mathbb Z_2[\widetilde w_2,\ldots,\widetilde w_k]$ by a well-known ideal, where $\widetilde w_2,\ldots,\widetilde w_k$ are the Stiefel--Whitney classes of the tautological vector bundle $\widetilde\gamma_{n,k}$ over $\widetilde G_{n,k}$. However, this subalgebra $\im p^*$ is strictly smaller than $H^*(\widetilde G_{n,k};\mathbb Z_2)$, and in general, there is no complete description of the whole algebra $H^*(\widetilde G_{n,k};\mathbb Z_2)$.

Recently there has been some significant interest in determining $H^*(\widetilde G_{n,k};\mathbb Z_2)$ (see e.g.\ \cite{BasuChakraborty,Fukaya,Korbas:ChRank,KorbasRusin:Palermo}). The complete calculation of this algebra for $k=2$ was given by Korba\v s and Rusin in \cite{KorbasRusin:Palermo}. In that paper the authors also cover the case $k=3$ and $6\leq n\leq 11$. In \cite{BasuChakraborty} Basu and Chakraborty use the Serre spectral sequence to give an almost complete description of this algebra for $k=3$ and $n$ close to a power of two. Their description in the case $k=3$ and $n=2^t$ reads as follows (\cite[Theorem B(1)]{BasuChakraborty}):
\[H^*(\widetilde G_{2^t,3};\mathbb Z_2)\cong\frac{\frac{\mathbb Z_2[\widetilde w_2,\widetilde w_3]}{(g_{2^t-2},g_{2^t-1})}\otimes_{\mathbb Z_2}\mathbb Z_2[a_{2^t-1}]}{(a_{2^t-1}^2-Pa_{2^t-1})}.\]
Here, the part $\frac{\mathbb Z_2[\widetilde w_2,\widetilde w_3]}{(g_{2^t-2},g_{2^t-1})}$ corresponds to $\im p^*$, the class $a_{2^t-1}\in H^{2^t-1}(\widetilde G_{2^t,3};\mathbb Z_2)$ is the only "indecomposable" outside $\im p^*$, and $P$ is some unknown polynomial in $\widetilde w_2$ and $\widetilde w_3$.

In this paper we prove that the polynomial $P$ from this description vanishes, i.e., $a_{2^t-1}^2=0$ in $H^*(\widetilde G_{2^t,3};\mathbb Z_2)$, which means that $H^*(\widetilde G_{2^t,3};\mathbb Z_2)$ splits as the tensor product of $\im p^*$ and the exterior algebra on $a_{2^t-1}$.

\begin{theorem}\label{thm1} Let $t\geq2$ be an integer. Then we have the following isomorphism of graded $\mathbb Z_2$-algebras:
\[H^*(\widetilde G_{2^t,3};\mathbb Z_2)\cong\frac{\mathbb Z_2[\widetilde w_2,\widetilde w_3]}{(g_{2^t-2},g_{2^t-1})}\otimes_{\mathbb Z_2}\Lambda_{\mathbb Z_2}(a_{2^t-1}),\]
where $\widetilde w_i$ corresponds to the $i$-th Stiefel--Whitney class of the tautological bundle $\widetilde\gamma_{2^t,3}$ over $\widetilde G_{2^t,3}$, and the degree of $a_{2^t-1}$ is $2^t-1$.
\end{theorem}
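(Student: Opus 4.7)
The plan is to combine the Basu--Chakraborty description with an explicit Gr\"obner basis analysis of the ideal $I:=(g_{2^t-2},g_{2^t-1})\subset R:=\mathbb Z_2[\widetilde w_2,\widetilde w_3]$ and Poincar\'e duality on the closed orientable manifold $\widetilde G_{2^t,3}$ (of dimension $3(2^t-3)$) in order to force the polynomial $P$ appearing in $a_{2^t-1}^2=P\cdot a_{2^t-1}$ to vanish. The additive splitting $H^*(\widetilde G_{2^t,3};\mathbb Z_2)=R/I\oplus a_{2^t-1}\cdot R/I$ from Basu--Chakraborty holds regardless of $P$, so the entire problem reduces to pinning down $P\in(R/I)^{2^t-1}$.

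First I would record the recursion $g_n=\widetilde w_2g_{n-2}+\widetilde w_3g_{n-3}$ (with $g_0=1$, $g_1=0$), coming from $(1+\widetilde w_2+\widetilde w_3)\sum_ng_n=1$, and then construct a reduced Gr\"obner basis for $I$ with respect to the graded lexicographic order $\widetilde w_3>\widetilde w_2$. This is the main combinatorial step: one conjectures the leading-term pattern from small cases (for $t=3$, $\{g_6,g_7\}=\{\widetilde w_2^3+\widetilde w_3^2,\;\widetilde w_2^2\widetilde w_3\}$ is already a Gr\"obner basis) and then verifies the general case by S-polynomial reduction. The resulting standard monomials give an explicit additive basis of $R/I$, from which I would read off that the Hilbert series of $R/I$ is palindromic of top degree $2^{t+1}-8=3(2^t-3)-(2^t-1)$.

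The vanishing of $P$ is automatic for $t=2$ (where $R/I=\mathbb Z_2$) and for $t=3$ (where $\widetilde w_2^2\widetilde w_3=g_7$ is killed, so $(R/I)^{2^t-1}=0$). For $t\ge 4$ the module $(R/I)^{2^t-1}$ is non-trivial, so an extra argument is needed. Poincar\'e duality on $\widetilde G_{2^t,3}$ together with the additive splitting forces the multiplication pairing $(R/I)^{2^t-1}\otimes(R/I)^{2^t-7}\to(R/I)^{2^{t+1}-8}\cong\mathbb Z_2$ to be non-degenerate, so $P=0$ is equivalent to $a_{2^t-1}^2\cdot y=0$ in the top degree for every standard monomial $y\in(R/I)^{2^t-7}$. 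To establish this, I would exploit the instability relation $a_{2^t-1}^2=Sq^{2^t-1}(a_{2^t-1})$ and compute $Sq^{2^t-1}(a_{2^t-1}\cdot y)$ via the Cartan formula: the action of $Sq^i$ on $\widetilde w_2,\widetilde w_3$ is given by the Wu formula, while the lower operations $Sq^i(a_{2^t-1})$ for $0<i<2^t-1$ can be extracted from the description of $a_{2^t-1}$ in the Gysin sequence for the double cover $\widetilde G_{2^t,3}\to G_{2^t,3}$.

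The main obstacle I anticipate is precisely this final step. Poincar\'e duality and the Gr\"obner basis alone cannot distinguish $P=0$ from $P\ne 0$ --- the Hilbert series is the same in either case --- so the genuinely new cohomological input must be the Steenrod algebra action on $a_{2^t-1}$, and extracting enough about the lower $Sq^i(a_{2^t-1})$ to conclude $a_{2^t-1}^2\cdot y=0$ for every relevant $y$ is where I expect the essential difficulty to lie.
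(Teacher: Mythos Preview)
Your overall architecture---start from the Basu--Chakraborty presentation, build a Gr\"obner basis for $I=(g_{2^t-2},g_{2^t-1})$, and then use the Steenrod algebra plus Poincar\'e duality to kill $P$---is exactly the paper's strategy. Where you diverge from the paper, and where the acknowledged gap in your plan becomes a real one, is in \emph{which} Steenrod operations you reach for.

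You propose to attack $a_{2^t-1}^2\cdot y$ in the top degree via the instability relation $a_{2^t-1}^2=Sq^{2^t-1}(a_{2^t-1})$ and the Cartan expansion of $Sq^{2^t-1}(a_{2^t-1}y)$. As you say yourself, this would require control over $Sq^i(a_{2^t-1})$ for a large range of $i$, and that information is genuinely hard to extract from the Gysin sequence. The paper sidesteps this entirely: it applies only $Sq^1$ and $Sq^2$ directly to the relation $a_{2^t-1}^2=\sum_k\lambda_k\,a_{2^t-1}\widetilde w_2^{\,2^{t-1}-2-3k}\widetilde w_3^{\,2k+1}$ (the right-hand side being the expression of $a_{2^t-1}^2$ in the additive basis). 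On the left one uses $Sq^1(x^2)=0$ and $Sq^2(x^2)=(Sq^1x)^2$. On the right the Cartan formula produces terms $Sq^i(a_{2^t-1})\cdot(\text{monomial})$ and $a_{2^t-1}\cdot Sq^i(\text{monomial})$. The two facts that make this work are:
\begin{itemize}
\item[(i)] $\im p^*\cap H^j(\widetilde G_{2^t,3})=0$ for $j>2\cdot2^t-8$. This is a Poincar\'e duality consequence of the Gr\"obner basis: one reads off $\mathrm{ht}(\widetilde w_3)=2^{t-1}-2$, hence $\widetilde w_3^{\,2^t-3}=0$, so $p^*$ is zero on the top class of $G_{2^t,3}$, and duality propagates this downward.
\item[(ii)] $Sq^1(a_{2^t-1})\in\im p^*$ (immediate, since $H^{2^t}$ has no $a$-term in the basis) and $Sq^2(a_{2^t-1})\in\im p^*$ (this uses that the Wu class $v_2=w_2(\widetilde G_{2^t,3})=0$, so $Sq^2$ vanishes on $H^{\dim-2}$, which forces the coefficient of $a_{2^t-1}\widetilde w_2$ in $Sq^2(a_{2^t-1})$ to be zero).
\end{itemize}
Combining (i) and (ii), every ``mixed'' Cartan term lands in $\im p^*$ in degree $>2\cdot2^t-8$ and hence vanishes; what survives are explicit $\mathbb Z_2$-linear relations among the $\lambda_k$ coming from $a_{2^t-1}\cdot Sq^i(\widetilde w_2^b\widetilde w_3^c)$, and these already force every $\lambda_k=0$ ($Sq^1$ kills the odd $k$, $Sq^2$ kills the remaining even $k$). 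So the ``essential difficulty'' you anticipated evaporates once you realise that only $Sq^1$ and $Sq^2$ are needed.

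One small correction in your warm-up: for $t=3$ the pair $\{g_6,g_7\}$ is \emph{not} a Gr\"obner basis under any monomial order (for instance $\widetilde w_2^5=\widetilde w_2^2g_6+\widetilde w_3g_7$ lies in $I$ but is divisible by neither leading monomial); one must adjoin $g_9=\widetilde w_3^3$. In general the reduced Gr\"obner basis (in lex with $\widetilde w_2\succ\widetilde w_3$) is $\{g_{2^t-3+2^i}:0\le i\le t-1\}$, with leading monomials $\widetilde w_2^{\,2^{t-1}-2^i}\widetilde w_3^{\,2^i-1}$.
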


The key ingredient in the proof of Theorem \ref{thm1} will be a reduced Gr\"obner basis for the ideal $(g_{2^t-2},g_{2^t-1})$ in the polynomial ring $\mathbb Z_2[\widetilde w_2,\widetilde w_3]$. That basis was found by Fukaya in \cite{Fukaya}, but here we explicitly determine the polynomials from the basis and present a different proof that it is a Gr\"obner basis. This basis conside\-rably facilitates calculations in $H^*(\widetilde G_{2^t,3};\mathbb Z_2)$, as we demonstrate in the paper. In particular, it produces an additive basis for $H^*(\widetilde G_{2^t,3};\mathbb Z_2)$ (a basis of $H^*(\widetilde G_{2^t,3};\mathbb Z_2)$ considered as a vector space over $\mathbb Z_2$). Concretely, this additive basis consists of the monomials
$a_{2^t-1}^r\widetilde w_2^b\widetilde w_3^c$ such that $r<2$ and for every $i\in\{0,1,\ldots,t-1\}$ either $b<2^{t-1}-2^i$ or $c<2^i-1$.

The paper is divided into four parts (including this introductory section). In Section \ref{cohomology} we give some background on the mod $2$ cohomology of $G_{n,k}$ and $\widetilde G_{n,k}$. Section \ref{groebner} is devoted to obtaining the above mentioned Gr\"obner basis and deriving some useful consequences. Finally, in Section \ref{proof} we calculate some Steenrod squaring operations in $H^*(\widetilde G_{2^t,3};\mathbb Z_2)$ and prove Theorem \ref{thm1}.

Throughout the paper we deal with the mod $2$ cohomology only, so all cohomo\-logy groups will be understood to have $\mathbb Z_2$ coefficients.

\section{Cohomology algebras $H^*(G_{n,k})$ and $H^*(\widetilde G_{n,k})$}
\label{cohomology}

Probably the most notable description of the cohomology algebra $H^*(G_{n,k})$ is due to Borel \cite{Borel}:
\begin{equation}\label{kohomologijaneorijentisanog}
H^*(G_{n,k})\cong\frac{\mathbb Z_2[w_1,w_2,\ldots,w_k]}{(\overline w_{n-k+1},\ldots,\overline w_n)},
\end{equation}
where the polynomials $\overline w_r$ are obtained from the relation
\begin{equation}\label{rel}
1+\overline w_1+\overline w_2+\cdots=\frac{1}{1+w_1+w_2+\cdots+w_k},
\end{equation}
as the appropriate homogeneous parts of the power series. A routine calculation leads to the following explicit formula:
\begin{equation}\label{dualpol}
\overline w_r=\sum_{a_1+2a_2+\cdots+ka_k=r}[a_1,a_2,\ldots,a_k]\,w_1^{a_1}w_2^{a_2}\cdots w_k^{a_k},
\end{equation}
where $[a_1,a_2,\ldots,a_k]:=\binom{a_1+a_2+\cdots+a_k}{a_1}
\binom{a_2+\cdots+a_k}{a_2}\cdots\binom{a_{k-1}+a_k}{a_{k-1}}$ is the multinomial coefficient (considered modulo $2$). Also, from (\ref{rel}) one can easily obtain the recurrence formula:
\begin{equation}\label{recdualpol}
\overline w_{r+k}=w_1\overline w_{r+k-1}+w_2\overline w_{r+k-2}+\cdots+w_k\overline w_r.
\end{equation}

In the isomorphism (\ref{kohomologijaneorijentisanog}) $w_i$ from the right-hand side corresponds to the $i$-th Stiefel--Whitney class $w_i(\gamma_{n,k})\in H^i(G_{n,k})$ of the canonical ($k$-dimensional) vector bundle $\gamma_{n,k}$ over $G_{n,k}$.

The map $p:\widetilde G_{n,k}\rightarrow G_{n,k}$, which sends an oriented $k$-plane in $\mathbb R^n$ to that same plane without specified orientation, is known to be a two-fold covering map. It is also clear from definitions of the canonical vector bundles $\gamma_{n,k}$ and $\widetilde\gamma_{n,k}$ over $G_{n,k}$ and $\widetilde G_{n,k}$ respectively, that $p^*(\gamma_{n,k})=\widetilde\gamma_{n,k}$, so for the Stiefel--Whitney classes we have
\[w_i(\widetilde\gamma_{n,k})=w_i(p^*(\gamma_{n,k}))=p^*(w_i(\gamma_{n,k})), \qquad 1\leq i\leq k.\]

The Gysin exact sequence associated to the double covering $p$ is also known to be of the form:
\[\cdots\rightarrow H^{j-1}(G_{n,k})\xrightarrow{w_1}H^j(G_{n,k})\xrightarrow{p^*}H^j(\widetilde{G}_{n,k})\xrightarrow{} H^j(G_{n,k})\xrightarrow{w_1}\cdots,\]
where $H^{j-1}(G_{k,n})\xrightarrow{w_1}H^j(G_{k,n})$ is the multiplication with $w_1=w_1(\gamma_{n,k})$. From this sequence we conclude that
\begin{equation}\label{ker=im}
\ker\big(H^*(G_{n,k})\xrightarrow{p^*}H^*(\widetilde G_{n,k})\big)=\im\big(H^*(G_{n,k})\xrightarrow{w_1}H^*(G_{n,k})\big).
\end{equation}
In particular, $w_1(\widetilde\gamma_{n,k})=p^*(w_1)=0$.

Consider now the following composition of algebra morphisms:
\begin{equation}\label{kompozicija}
\mathbb Z_2[\widetilde w_2,\ldots,\widetilde w_k]\hookrightarrow\mathbb Z_2[w_1,w_2,\ldots,w_k]\twoheadrightarrow H^*(G_{n,k})\xrightarrow{p^*}H^*(\widetilde G_{n,k}),
\end{equation}
where the first map sends $\widetilde w_i$ to $w_i$, and the second is the quotient map coming from (\ref{kohomologijaneorijentisanog}). It is clear that this composition maps $\widetilde w_i$ to $w_i(\widetilde\gamma_{n,k})$, $2\leq i\leq k$, and since $w_1(\widetilde\gamma_{n,k})=0$, the image of the composition is equal to $\im p^*$. A routine calculation (which uses (\ref{ker=im}) and (\ref{kohomologijaneorijentisanog})) shows that the kernel of the composition (\ref{kompozicija}) is equal to the ideal $(g_{n-k+1},\ldots,g_n)$ generated by the modulo $w_1$ reductions $g_r$ of the polynomials $\overline w_r$, $n-k+1\leq r\leq n$. Therefore, we obtain the isomorphism:
\begin{equation}\label{kohomologijaorijentisanog}
\im p^*\cong\frac{\mathbb Z_2[\widetilde w_2,\ldots,\widetilde w_k]}{(g_{n-k+1},\ldots,g_n)},
\end{equation}
where $\widetilde w_i$ (from the right-hand side) corresponds to $w_i(\widetilde\gamma_{n,k})\in\im p^*$. As usual, we shall denote these Stiefel--Whitney classes by $\widetilde w_i$ as well, and it will be clear from the context whether $\widetilde w_i$ is a variable in the polynomial algebra $\mathbb Z_2[\widetilde w_2,\ldots,\widetilde w_k]$ or a cohomology class in $H^*(\widetilde G_{n,k})$.

Since the polynomials $g_r\in\mathbb Z_2[\widetilde w_2,\ldots,\widetilde w_k]$ are the modulo $w_1$ reductions of the polynomials $\overline w_r\in\mathbb Z_2[w_1,w_2,\ldots,w_k]$ (i.e., $g_r$ is obtained from $\overline w_r$ by simply deleting all monomials which contain the variable $w_1$, and putting $\sim$ over all $w$'s), they are easily calculated from formula (\ref{dualpol}):
\[g_r=\sum_{2a_2+\cdots+ka_k=r}\binom{a_2+\cdots+a_k}{a_2}\cdots\binom{a_{k-1}+a_k}{a_{k-1}}\,\widetilde w_2^{a_2}\cdots\widetilde w_k^{a_k}.\]
Reducing the recurrence formula (\ref{recdualpol}) modulo $w_1$ leads to the relation
\[g_{r+k}=\widetilde w_2g_{r+k-2}+\cdots+\widetilde w_kg_r.\]

\bigskip

In the rest of the paper we confine ourselves to the case $k=3$ and $n=2^t$ (for some integer $t\geq2$). In this case the preceding two formulas simplify to:
\begin{equation}\label{gpolk3}
g_r=\sum_{2b+3c=r}{b+c\choose b}\,\widetilde w_2^b\widetilde w_3^c,
\end{equation}
and
\begin{equation}\label{recgpolk3}
g_{r+3}=\widetilde w_2g_{r+1}+\widetilde w_3g_r.
\end{equation}

\begin{example}\label{exg}
Let us now use (\ref{gpolk3}) and (\ref{recgpolk3}) to calculate a few of these polynomials:
\begin{align*}
g_1&=\sum_{2b+3c=1}\mbox{$\binom{b+c}{b}$}\widetilde w_2^b\widetilde w_3^c=0;\\
g_2&=\sum_{2b+3c=2}\mbox{$\binom{b+c}{b}$}\widetilde w_2^b\widetilde w_3^c=\widetilde w_2;\\
g_3&=\sum_{2b+3c=3}\mbox{$\binom{b+c}{b}$}\widetilde w_2^b\widetilde w_3^c=\widetilde w_3;\\
g_4&=\widetilde w_2g_2+\widetilde w_3g_1=\widetilde w_2^2;\\
g_5&=\widetilde w_2g_3+\widetilde w_3g_2=0;\\
g_6&=\widetilde w_2g_4+\widetilde w_3g_3=\widetilde w_2^3+\widetilde w_3^2;\\
g_7&=\widetilde w_2g_5+\widetilde w_3g_4=\widetilde w_2^2\widetilde w_3;\\
g_8&=\widetilde w_2g_6+\widetilde w_3g_5=\widetilde w_2^4+\widetilde w_2\widetilde w_3^2;\\
g_9&=\widetilde w_2g_7+\widetilde w_3g_6=\widetilde w_3^3.
\end{align*}
\end{example}

The ideal in (\ref{kohomologijaorijentisanog}) is now $(g_{2^t-2},g_{2^t-1},g_{2^t})$. However, from (\ref{recgpolk3}) we have $g_{2^t}=\widetilde w_2g_{2^t-2}+\widetilde w_3g_{2^t-3}$, and from \cite[Lemma 2.3(i)]{Korbas:ChRank} we know that $g_{2^t-3}=0$, which implies that $g_{2^t}=\widetilde w_2g_{2^t-2}$, leading to the conclusion that the polynomial $g_{2^t}$ is redundant in the generating set of the ideal $(g_{2^t-2},g_{2^t-1},g_{2^t})$. Finally, we obtain that
\begin{equation}\label{imp*}
\im p^*\cong\frac{\mathbb Z_2[\widetilde w_2,\widetilde w_3]}{(g_{2^t-2},g_{2^t-1})}.
\end{equation}

In the following section we exhibit a Gr\"obner basis for the ideal $(g_{2^t-2},g_{2^t-1})$, which will make calculations in $\im p^*$ (and consequently, in $H^*(\widetilde G_{2^t,3})$) much easier.

\begin{remark}
The quotient algebra $\mathbb Z_2[\widetilde w_2,\widetilde w_3]/(g_{2^t-2},g_{2^t-1})$ is isomorphic to $\im p^*$ in the case $n=2^t-1$ as well. Namely, the corresponding ideal for that case is $(g_{2^t-3},g_{2^t-2},g_{2^t-1})=(g_{2^t-2},g_{2^t-1})$, since $g_{2^t-3}=0$. The cohomology algebra $H^*(\widetilde G_{2^t-1,3})$ was studied in \cite{Fukaya} and a Gr\"obner basis for the ideal $(g_{2^t-2},g_{2^t-1})$ was obtained there. In this paper we identify that basis as a subset of $\{g_r\mid r\ge2^t-2\}$ and give a different proof that it is a Gr\"obner basis.
\end{remark}

\section{Gr\"obner bases}
\label{groebner}

\subsection{Background on Gr\"obner bases}

We begin with a very brief overview of the basic theory of Gr\"obner bases over a field. For more details the reader is referred to \cite[Chapter 5]{Becker} (although here we use a somewhat different notation and terminology than the ones used in \cite{Becker}).

Let $\mathbb K$ be a field and $\mathbb K[\underline X]$ the polynomial algebra over $\mathbb K$ in some (finite) number of variables (jointly denoted by $\underline X$). The set of all monomials in $\mathbb K[\underline X]$ will be denoted by $M$. Let $\preceq$ be a well ordering of $M$ (a total ordering such that every nonempty subset of $M$ has a least element) with the property that $m_1\preceq m_2$ implies $mm_1\preceq mm_2$, for all $m,m_1,m_2\in M$.

For a polynomial $p=\sum_{i=1}^r\alpha_i m_i\in\mathbb K[\underline X]$, where $m_i\in M$ are pairwise different and $\alpha_i\in\mathbb K\setminus\{0\}$, let $M(p):=\{m_i\mid1\leq i\leq r\}$. If $p\neq0$, i.e., $M(p)\neq\emptyset$, we define the {\it leading monomial of} $p$, denoted by $\mathrm{LM}(p)$, as $\max M(p)$ with respect to $\preceq$. The {\it leading coefficient of} $p$, denoted by $\mathrm{LC}(p)$, is the coefficient of $\mathrm{LM}(p)$ in $p$ and the {\it leading term of} $p$ is $\mathrm{LT}(p):=\mathrm{LC}(p)\cdot\mathrm{LM}(p)$. (In what follows we will work with the two-element field $\mathbb Z_2$, and in that case $\mathrm{LT}(p)=\mathrm{LM}(p)$.)

There is a number of equivalent ways for defining Gr\"obner bases. We choose one of them and then list some equivalent conditions.

\begin{definition}\label{grebner}
Let $F\subset\mathbb K[\underline X]$ be a finite set of nonzero polynomials and $I$ the ideal in $\mathbb K[\underline X]$ generated by $F$. We say that $F$ is a {\it Gr\"obner basis} for $I$ (with respect to $\preceq$) if for each $p\in I\setminus\{0\}$ there exists $f\in F$ such that $\mathrm{LM}(f)\mid\mathrm{LM}(p)$.
\end{definition}

In order to formulate some equivalent conditions we need the notion of reduction of polynomials. As in the definition, let $F\subset\mathbb K[\underline X]$ be a finite set of nonzero polynomials. For polynomials $p,q\in\mathbb K[\underline X]$ we say that $p$ \em reduces to $q$ modulo $F$ \em if there exist $n\geq1$ and polynomials $p_1,p_2,\ldots,p_n\in K[\underline X]$ such that $p_1=p$, $p_n=q$ and for every $i\in\{1,2,\ldots,n-1\}$
\[p_{i+1}=p_i-\frac{\mathrm{LC}(p_i)}{\mathrm{LC}(f_i)}\cdot m_i\cdot f_i,\]
for some $f_i\in F$ and $m_i\in M$ such that $m_i\cdot\mathrm{LM}(f_i)=\mathrm{LM}(p_i)$ (so that the leading term $\mathrm{LT}(p_i)$ cancels out on the right-hand side, and we get $\mathrm{LM}(p_{i+1})\prec\mathrm{LM}(p_i)$).

The proof of the following proposition can be found in \cite[Proposition 5.38]{Becker}.

\begin{proposition}\label{ekvusl}
Let $F\subset\mathbb K[\underline X]$ be a finite set of non-zero polynomials and $I$ the ideal in $\mathbb K[\underline X]$ generated by $F$. The following conditions are equivalent:
\begin{itemize}
\item[(1)] $F$ is a Gr\"obner basis for $I$;
\item[(2)] every $p\in I$ reduces to zero modulo $F$;
\item[(3)] the set of classes (cosets) of all monomials that are not divisible by any of the leading monomials $\mathrm{LM}(f)$, $f\in F$, is a basis of the quotient vector space $\mathbb K[\underline X]/I$.
\end{itemize}
\end{proposition}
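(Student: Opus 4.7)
The plan is to establish the cycle $(1)\Rightarrow(2)\Rightarrow(3)\Rightarrow(1)$, with the well-ordering of $\preceq$ and its compatibility with monomial multiplication doing the heavy lifting throughout.

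For $(1)\Rightarrow(2)$, I would take $p\in I\setminus\{0\}$, invoke (1) to produce $f\in F$ with $\mathrm{LM}(f)\mid\mathrm{LM}(p)$, and perform one reduction step to obtain $p_2\in I$ with $\mathrm{LM}(p_2)\prec\mathrm{LM}(p)$ (or $p_2=0$). Iterating yields a strictly decreasing chain of leading monomials in $M$, which by the well-ordering of $\preceq$ must terminate; the only possible endpoint is $0$. The reverse $(2)\Rightarrow(1)$ is essentially immediate, since the first step of any reduction of $p$ requires some $\mathrm{LM}(f)$ to divide $\mathrm{LM}(p)$.

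For $(1)\Rightarrow(3)$ the central tool is a \emph{generalized} reduction allowed to cancel any (not just the leading) monomial: at each step, locate the $\preceq$-largest monomial $m$ of the current polynomial that is divisible by some $\mathrm{LM}(f)$, and subtract the appropriate scalar multiple of $(m/\mathrm{LM}(f))\cdot f$ to kill it. Compatibility of $\preceq$ with multiplication guarantees that this subtraction introduces only monomials strictly $\prec m$, while all monomials $\succ m$ (which were already admissible, i.e.\ divisible by no $\mathrm{LM}(f)$) remain untouched; hence the largest admissibility-violating monomial strictly decreases, so by well-ordering the procedure terminates in a polynomial $r$ with $[r]=[p]$ all of whose monomials are admissible. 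This proves that admissible cosets span $\mathbb K[\underline X]/I$. For linear independence, a hypothetical nonzero combination $q=\sum\alpha_i m_i$ of admissible monomials lying in $I$ would have $\mathrm{LM}(q)$ equal to one of the $m_i$, yet by (1) divisible by some $\mathrm{LM}(f)$, contradicting admissibility of the $m_i$.

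Finally, for $(3)\Rightarrow(1)$, given $p\in I\setminus\{0\}$ I would run the generalized reduction to produce $r$ with $[r]=[p]=0$; uniqueness in (3) forces $r=0$. If $\mathrm{LM}(p)$ were not divisible by any $\mathrm{LM}(f)$, then the procedure would leave $\mathrm{LT}(p)$ untouched throughout (since all cancellations act on strictly smaller monomials and introduce only strictly smaller monomials), so $\mathrm{LT}(p)$ would survive in $r$, contradicting $r=0$. The only genuinely delicate point — and the step I would expect to be the main obstacle — is verifying termination and well-definedness of the generalized reduction; once this is carefully set up from the well-ordering, every other piece is essentially formal.
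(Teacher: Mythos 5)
Your argument is correct; note that the paper does not prove this proposition at all but simply cites it as \cite[Proposition 5.38]{Becker}, and your proof is essentially the standard textbook one: the cycle $(1)\Rightarrow(2)\Rightarrow(1)$ via single reduction steps and well-ordering, plus the normal-form (generalized reduction) argument for the equivalence with $(3)$. The one point you flag as delicate --- termination of the generalized reduction --- is indeed handled correctly by observing that the $\preceq$-largest inadmissible monomial strictly decreases at each step, so nothing further is needed.
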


For two nonzero polynomials $p,q\in\mathbb K[\underline X]$, the \em $S$-polynomial \em of $p$ and $q$ is defined as
\[S(p,q):=\mathrm{LC}(q)\cdot\frac{u}{\mathrm{LM}(p)}\cdot p-\mathrm{LC}(p)\cdot\frac{u}{\mathrm{LM}(q)}\cdot q,\] where $u=\mathrm{lcm}(\mathrm{LM}(p),\mathrm{LM}(q))$ is the least common multiple of $\mathrm{LM}(p)$ and $\mathrm{LM}(q)$. Note that on the right-hand side the term $\mathrm{LC}(q)\cdot\mathrm{LC}(p)\cdot u$ cancels out, so $S(p,q)$ is either zero or $\mathrm{LM}(S(p,q))\prec u$. Note also that $S(p,p)=0$ and $S(q,p)=-S(p,q)$.

Now we can formulate the Buchberger criterion \cite[Theorem 5.48]{Becker}.

\begin{theorem}\label{buchberger}
Let $F\subset\mathbb K[\underline X]$ be a finite set of nonzero polynomials and $I$ the ideal in $\mathbb K[\underline X]$ generated by $F$. Then $F$ is a Gr\"obner basis for $I$ if and only if $S(f_1,f_2)$ reduces to zero modulo $F$ for all $f_1,f_2\in F$.
\end{theorem}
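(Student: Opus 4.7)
The forward direction is immediate from Proposition \ref{ekvusl}: if $F$ is a Gr\"obner basis then every element of $I$ (in particular every $S(f_1,f_2)\in I$) reduces to zero modulo $F$. So the content of the theorem lies in the converse, and that is what I would focus on.

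For the converse, assume every $S(f_1,f_2)$ reduces to zero modulo $F$. To show that $F$ is a Gr\"obner basis, I would fix an arbitrary nonzero $p\in I$ and try to produce $f\in F$ with $\mathrm{LM}(f)\mid\mathrm{LM}(p)$. Since $p\in I$ there exist representations $p=\sum_{i=1}^r h_if_i$ with $f_i\in F$, and since $\preceq$ is a well ordering I would pick such a representation that minimizes $M:=\max_i\mathrm{LM}(h_if_i)$ with respect to $\preceq$. Clearly $\mathrm{LM}(p)\preceq M$; the crux is to rule out strict inequality, because equality $\mathrm{LM}(p)=M$ forces $M=\mathrm{LM}(h_if_i)$ for some $i$, whence $\mathrm{LM}(f_i)\mid\mathrm{LM}(p)$.

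Assume for contradiction $\mathrm{LM}(p)\prec M$, and let $S=\{i:\mathrm{LM}(h_if_i)=M\}$. Writing $\mathrm{LT}(h_i)=c_im_i$ for $i\in S$ (so $m_i\mathrm{LM}(f_i)=M$), the cancellation $\sum_{i\in S}c_i\mathrm{LC}(f_i)=0$ holds automatically. A direct telescoping identity then lets me rewrite $\sum_{i\in S}c_im_if_i$ as a $\mathbb K$-linear combination of the shifted $S$-polynomials $\frac{M}{u_{ij}}\cdot S(f_i,f_j)$, where $u_{ij}=\mathrm{lcm}(\mathrm{LM}(f_i),\mathrm{LM}(f_j))$ and each such term has leading monomial strictly below $M$. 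Replacing each $S(f_i,f_j)$ by its ``reduces to zero'' expression $\sum_kg^{ij}_kf_k$ with $\max_k\mathrm{LM}(g^{ij}_kf_k)\preceq\mathrm{LM}(S(f_i,f_j))\prec u_{ij}$, and folding this back into the representation of $p$, yields a new representation of $p$ whose maximum leading monomial is strictly below $M$, contradicting the minimality of $M$.

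The main obstacle is the rewriting step in the third paragraph: verifying the syzygy-type identity that expresses $\sum_{i\in S}c_im_if_i$ as a $\mathbb K$-linear combination of the shifted $S$-polynomials whenever the leading coefficients sum to zero (a short telescoping argument on the partial sums $d_l=\sum_{j\le l}c_{i_j}\mathrm{LC}(f_{i_j})$), and, just as importantly, checking that when the $S(f_i,f_j)$ are replaced by their reductions modulo $F$, the leading-monomial estimate remains \emph{strictly} below $M$ rather than merely $\preceq M$. Everything else is bookkeeping on top of Proposition \ref{ekvusl}.
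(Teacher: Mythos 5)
Your argument is correct, but note that the paper does not prove this theorem at all: it is quoted from \cite{Becker} (Theorem 5.48), so you have supplied a proof where the paper offers only a citation. What you give is the classical Buchberger argument, and the two points you single out as the crux are indeed the only places requiring care; both go through. For the syzygy identity, write $S=\{i_1,\ldots,i_s\}$, $e_{i}=c_{i}\mathrm{LC}(f_{i})$ and $q_{i}=\mathrm{LC}(f_{i})^{-1}m_{i}f_{i}$ (each monic with leading monomial $M$); then $\sum_{l=1}^{s}e_{i_l}q_{i_l}=\sum_{l=1}^{s-1}(e_{i_1}+\cdots+e_{i_l})(q_{i_l}-q_{i_{l+1}})+(e_{i_1}+\cdots+e_{i_s})q_{i_s}$, the last term vanishes because $\sum_{i\in S}e_i=0$, and $q_{i}-q_{j}=\bigl(\mathrm{LC}(f_{i})\mathrm{LC}(f_{j})\bigr)^{-1}\frac{M}{u_{ij}}S(f_{i},f_{j})$, which is either zero or has leading monomial strictly below $M$ since $\mathrm{LM}(S(f_{i},f_{j}))\prec u_{ij}$ and $u_{ij}\mid M$. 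For the strictness after substitution: unwinding the paper's definition of reduction, reducing $S(f_{i},f_{j})$ to zero yields a representation $\sum_k g^{ij}_k f_k$ with every $\mathrm{LM}(g^{ij}_kf_k)\preceq\mathrm{LM}(S(f_{i},f_{j}))\prec u_{ij}$, so after multiplying by $M/u_{ij}$ all terms remain strictly below $M$ and the minimality of $M$ is contradicted, as you claim. A bonus of your route: since you only ever use that each $S(f_{i},f_{j})$ admits \emph{some} representation with all leading monomials strictly below $u_{ij}$, the identical argument proves the a priori stronger Theorem \ref{mreprgrebner}, which is the statement the paper actually relies on in the proof of Theorem \ref{grebnerimp*}.
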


For our purposes the notion of an $m$-representation (for a monomial $m\in M$) will play an important role. Let $F\subset\mathbb K[\underline X]$ be a finite set of nonzero polynomials, $m\in M$ a fixed monomial, and $p$ a nonzero polynomial in $\mathbb K[\underline X]$. If
\begin{equation}\label{mreprezentacija}
p=\sum_{i=1}^la_im_if_i,
\end{equation}
for some $a_i\in\mathbb K\setminus\{0\}$, $m_i\in M$ and $f_i\in F$ ($1\leq i\leq l$), such that $\mathrm{LM}(m_if_i)\preceq m$ for all $i\in\{1,\ldots,l\}$, then we say that (\ref{mreprezentacija}) is an $m$-{\it representation} of $p$ with respect to $F$. (Note that we do not require $f_i$-s to be pairwise different.)

Now we can formulate the theorem which will be essential in our calculation \cite[Theorem 5.64]{Becker}.

\begin{theorem}\label{mreprgrebner}
Let $F\subset\mathbb K[\underline X]$ be a finite set of nonzero polynomials and $I$ the ideal in $\mathbb K[\underline X]$ generated by $F$. If for all $f_1,f_2\in F$, $S(f_1,f_2)$ either equals zero or has an $m$-representation with respect to $F$ for some $m\prec\mathrm{lcm}(\mathrm{LM}(f_1),\mathrm{LM}(f_2))$, then $F$ is a Gr\"obner basis for $I$.
\end{theorem}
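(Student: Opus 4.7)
The plan is to verify the defining property of a Gr\"obner basis from Definition \ref{grebner} directly: given a nonzero $p\in I$, I want to exhibit some $f\in F$ with $\mathrm{LM}(f)\mid\mathrm{LM}(p)$. Following Buchberger's strategy, I would work with representations $p=\sum_{i=1}^l a_im_if_i$ of $p$ in terms of elements of $F$, and for each such representation set $m^{*}:=\max_{1\leq i\leq l}\mathrm{LM}(m_if_i)$. Any representation is automatically an $m^{*}$-representation, and one always has $\mathrm{LM}(p)\preceq m^{*}$; if equality holds, then $m^{*}$ does not cancel and must coincide with $\mathrm{LM}(m_if_i)$ for some $i$, yielding $\mathrm{LM}(f_i)\mid\mathrm{LM}(p)$, as required. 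Since $\preceq$ is a well-ordering on $M$, I may pick a representation of $p$ that minimizes $m^{*}$, and it suffices to rule out the strict inequality $\mathrm{LM}(p)\prec m^{*}$.

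Assuming, for contradiction, that $\mathrm{LM}(p)\prec m^{*}$ in this minimal representation, the leading terms of monomial $m^{*}$ must cancel, so at least two indices, say $1$ and $2$, satisfy $\mathrm{LM}(m_1f_1)=\mathrm{LM}(m_2f_2)=m^{*}$. Let $u:=\mathrm{lcm}(\mathrm{LM}(f_1),\mathrm{LM}(f_2))$ and let $m_0\in M$ be the unique monomial with $m_0u=m^{*}$. The standard $S$-polynomial rewriting trick produces an identity of the shape
\[a_1m_1f_1+a_2m_2f_2=\alpha\,m_1f_1+\beta\,m_0\,S(f_1,f_2)+r,\]
where $\alpha,\beta\in\mathbb K$ and $r\in\mathbb K[\underline X]$ satisfies $\mathrm{LM}(r)\prec m^{*}$ (or $r=0$); one pair of top terms has thereby been absorbed into a single multiple of $f_1$ plus an $S$-polynomial contribution. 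By hypothesis, $S(f_1,f_2)$ either equals zero, in which case the middle term vanishes, or admits an $m'$-representation with respect to $F$ for some $m'\prec u$; multiplying such a representation through by $m_0$ converts it into an $(m_0m')$-representation with $m_0m'\prec m_0u=m^{*}$.

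Substituting these pieces back into the original representation of $p$ yields a new representation in which strictly fewer summands have leading monomial $m^{*}$ (the $f_2$-term has been replaced by contributions bounded strictly below $m^{*}$). Iterating this maneuver, ordered by the lexicographic pair $\bigl(m^{*},\,\#\{i:\mathrm{LM}(m_if_i)=m^{*}\}\bigr)$ under the well-founded order derived from $\preceq$, after finitely many steps one reaches a representation of $p$ whose maximum leading monomial is strictly less than the original $m^{*}$, contradicting minimality. Hence $\mathrm{LM}(p)=m^{*}$ and Definition \ref{grebner} is satisfied. The main technical obstacle will be precisely this bookkeeping: tracking how the $S$-polynomial substitution interacts with the remaining summands so as to guarantee strict decrease in the chosen well-founded measure. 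The strictness of $m'\prec u$ in the hypothesis (as opposed to $m'\preceq u$) is exactly what forces this strict decrease, and is the crux of the argument.
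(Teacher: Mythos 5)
Your proof is correct. The paper itself gives no proof of this statement --- it is quoted directly as Theorem 5.64 of Becker--Weispfenning --- and your argument (choose a representation of $p$ minimizing the top monomial $m^{*}$, observe that if $\mathrm{LM}(p)\prec m^{*}$ at least two summands attain $m^{*}$, absorb such a pair via the $S$-polynomial identity, and substitute the hypothesized $m'$-representation scaled by $m_0$ to force a strict decrease of the measure $\bigl(m^{*},\#\{i:\mathrm{LM}(m_if_i)=m^{*}\}\bigr)$) is essentially the standard proof found in that reference.
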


We conclude this overview with the notion of a reduced Gr\"obner basis. A Gr\"obner basis $F$ for an ideal $I$ is {\it reduced} if all polynomials in $F$ are monic ($\mathrm{LC}(f)=1$ for all $f\in F$) and there are no distinct polynomials $f_1,f_2\in F$ such that $\mathrm{LM}(f_1)$ divides some monomial from $M(f_2)$. It is a theorem \cite[Theorem 5.43]{Becker} that for every ideal $I\trianglelefteq\mathbb K[\underline X]$ there exists a unique reduced Gr\"obner basis (with respect to $\preceq$).

\subsection{Gr\"obner basis for the ideal $(g_{2^t-2},g_{2^t-1})\trianglelefteq\mathbb Z_2[\widetilde w_2,\widetilde w_3]$}

Let $t\geq2$ be a fixed integer and let $I$ be the ideal $(g_{2^t-2},g_{2^t-1})$ in the polynomial ring $\mathbb Z_2[\widetilde w_2,\widetilde w_3]$. We now want to determine the reduced Gr\"obner basis for $I$. We will use the lexicographic order on monomials in $\mathbb Z_2[\widetilde w_2,\widetilde w_3]$ with $\widetilde w_2\succ\widetilde w_3$. This means that
\[\widetilde w_2^{b_1}\widetilde w_3^{c_1}\preceq\widetilde w_2^{b_2}\widetilde w_3^{c_2} \qquad \Longleftrightarrow \qquad b_1<b_2 \quad \vee \quad (b_1=b_2 \quad \wedge \quad c_1\leq c_2).\]

For a nonnegative integer $i$, let $f_i$ be the polynomial $g_{2^t-3+2^i}$. Then we have $f_0=g_{2^t-2}$ and $f_1=g_{2^t-1}$. We already know that $I=(g_{2^t-2},g_{2^t-1})=(g_{2^t-2},g_{2^t-1},g_{2^t})$ (since $g_{2^t}=\widetilde w_2g_{2^t-2}$), and from (\ref{recgpolk3}) we conclude that $g_r\in I$ for all $r\geq2^t-2$. In particular, $f_i\in I$ for all $i\geq0$, so
\[I=(f_0,f_1)=(f_0,f_1,\ldots,f_{t-1}).\]
We are going to prove that $\{f_0,f_1,\ldots,f_{t-1}\}$ is the reduced Gr\"obner basis for $I$ (with respect to the lexicographic order $\preceq$). First, in order to determine the leading monomials of these polynomials, we prove an arithmetic lemma.

\begin{lemma}\label{lowerbound}
Let $b$, $c$ and $i$ be nonnegative integers such that:
\begin{enumerate}
    \item $2b+3c=2^is-3$ for some integer $s$;
    \item $\binom{b+c}{c}\equiv1\pmod2$.
\end{enumerate}
Then $c\ge2^i-1$.
\end{lemma}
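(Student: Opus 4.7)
The plan is to reformulate the problem in terms of the binary expansions of $b$ and $c$ and then track the bits of $2b+3c$. By Lucas's theorem, the parity assumption $\binom{b+c}{c}\equiv1\pmod2$ is equivalent to $b$ and $c$ having disjoint binary supports: writing $b=\sum_j b_j2^j$ and $c=\sum_j c_j2^j$ with $b_j,c_j\in\{0,1\}$, we have $b_jc_j=0$ for every $j$. Since $c\ge2^i-1$ is implied by $c_0=c_1=\cdots=c_{i-1}=1$, it is this stronger bitwise statement that I aim to prove (the cases $i\le1$ being immediate, for $i=1$ from the fact that $-3$ is odd).

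The hypothesis $2b+3c=2^is-3$ translates to $2b+3c\equiv-3\pmod{2^i}$, so the bottom $i$ bits of $2b+3c$ must agree with those of $2^i-3$, which for $i\ge2$ are $1,0,1,1,\ldots,1$ at positions $0,1,2,\ldots,i-1$. On the other hand, rewriting $2b+3c=2b+2c+c$ and adding in columns, bit $j$ of $2b+3c$ equals $(b_{j-1}+c_{j-1}+c_j+\gamma_j)\bmod2$, where $\gamma_j$ is the carry into bit $j$ (with the conventions $b_{-1}=c_{-1}=\gamma_0=0$).

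I would then induct on $j$ from $0$ up to $i-1$, carrying the invariant that $c_j=1$ and (for $j\ge1$) that $b_{j-1}=0$ and $\gamma_{j+1}=1$. The base $j=0$ gives $c_0=1$ with $\gamma_1=0$ directly. The step $j=1$ uses $b_0=0$ (forced by $c_0=1$ and disjointness) together with bit $1$ being $0$ to deduce $c_1=1$ and $\gamma_2=1$. From $j=2$ onward the computation becomes uniform: $c_{j-1}=1$ forces $b_{j-1}=0$, and the bit-$j$ equation, simplified via $\gamma_j=1$ and the required bit value $1$, collapses to $c_j=1$ with the new carry again equal to $1$. The main bookkeeping subtlety is the irregular bit $1$ of $2^i-3$, which breaks the pattern and forces one to treat $j=0,1$ as separate base cases before the clean inductive step can take over; once the carry stabilises at $1$, the rest is mechanical.
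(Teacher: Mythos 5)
Your proof is correct, and it takes a genuinely different route from the paper's. The paper argues by contradiction: assuming $c<2^i-1$ it writes $c=2d-1$, computes $b+c=2^{i-1}s-1-d$, and then exhibits a single bit position (the $2$-adic valuation of $d$) at which Lucas' formula forces $\binom{b+c}{c}\equiv0$. You instead recast the hypothesis $\binom{b+c}{c}\equiv1\pmod 2$ as carry-free addition (disjoint binary supports of $b$ and $c$, via Kummer/Lucas) and run a forward, column-by-column analysis of $2b+2c+c$ modulo $2^i$, matching against the bit pattern $1,0,1,\ldots,1$ of $2^i-3$; the carries are computed exactly ($\gamma_1=0$, then $\gamma_j=1$ for $j\ge2$), and the irregular bit at position $1$ is correctly isolated as the only place where the recursion changes shape. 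Your argument actually establishes the stronger conclusion $c\equiv-1\pmod{2^i}$, i.e.\ the bottom $i$ bits of $c$ are all $1$, from which $c\ge2^i-1$ follows; the paper's proof yields only the inequality but is shorter once the right bit position is guessed. Both rest on the same arithmetic input, so the difference is one of strategy (direct bit-forcing versus contradiction at a witness bit) rather than of underlying tools.
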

\begin{proof}
The lemma is obvious for $i=0$. If $i\ge1$, note that condition (1) forces $c$ to be odd. In particular, $c\ge1$, so the lemma is true for $i=1$ as well. For $i\ge2$, let $b$ and $c$ be nonnegative integers such that (1) and (2) hold, and assume to the contrary that $c<2^i-1$. Since $c$ is odd, we have that $c=2d-1$ for some positive integer $d<2^{i-1}$. Then $2(b+c)+2d-1=2(b+c)+c=2b+3c=2^is-3$, so $b+c=2^{i-1}s-1-d$.
We will obtain a contradiction as soon as we prove
\[\binom{2^{i-1}s-1-d}{2d-1}\equiv0\pmod2.\]
We do this by Lucas' formula -- if $\sum_j\alpha_j2^j$ and $\sum_j\beta_j2^j$ are the binary expansions of $2^{i-1}s-1-d$ and $2d-1$ respectively, we are going to find a nonnegative integer $l$ such that $\alpha_l=0$ and $\beta_l=1$. Let $2^l$ ($l\geq0$) be the greatest power of two dividing $d$. Then $d=2^l(2k-1)$ for some positive integer $k$, and we know that $l\leq i-2$ (since $d<2^{i-1}$). Now we have
\begin{align*}
2^{i-1}s-1-d&=2^{i-1}s-1-2^l(2k-1)=2^l(2^{i-1-l}s-2k+1)-1\\
&=2^{l+1}(2^{i-2-l}s-k)+2^l-1=2^{l+1}(2^{i-2-l}s-k)+2^{l-1}+\cdots+1,
\end{align*}
and we conclude that $\alpha_l=0$. On the other hand,
\[2d-1=2^{l+1}(2k-1)-1=2^{l+2}(k-1)+2^{l+1}-1=2^{l+2}(k-1)+2^l+\cdots+1,\]
so $\beta_l=1$, completing the proof.
\end{proof}

\begin{proposition}\label{propvodecimonomi}
Let $i\in\{0,1,\ldots,t-1\}$. Then $f_i\neq0$ and
\[\mathrm{LM}(f_i)=\widetilde w_2^{2^{t-1}-2^i}\widetilde w_3^{2^i-1}.\]
Moreover, $f_{t-1}=\mathrm{LM}(f_{t-1})=\widetilde w_3^{2^{t-1}-1}$.
\end{proposition}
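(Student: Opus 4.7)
The plan is to work directly with the closed-form expression
\[f_i \;=\; g_{2^t-3+2^i} \;=\; \sum_{2b+3c\,=\,2^t-3+2^i}\binom{b+c}{b}\,\widetilde w_2^b\widetilde w_3^c\]
from (\ref{gpolk3}) and extract the leading monomial. Since $2b+3c$ is constant within this sum, smaller $c$ corresponds to larger $b$, and under the lexicographic order with $\widetilde w_2\succ\widetilde w_3$ the leading monomial is precisely the one with the smallest $c$ (among those having odd coefficient). So the whole proposition reduces to the single assertion: the smallest $c\ge0$ for which there exists $b\ge0$ with $2b+3c=2^t-3+2^i$ and $\binom{b+c}{b}$ odd is exactly $c=2^i-1$, with the corresponding $b=2^{t-1}-2^i$.

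The lower bound $c\ge2^i-1$ is where the main work has already been done: rewriting $2^t-3+2^i=2^i(2^{t-i}+1)-3$ and applying Lemma \ref{lowerbound} with $s=2^{t-i}+1$ yields it immediately. For the matching upper bound I would substitute $c=2^i-1$, obtain $b=2^{t-1}-2^i$ (nonnegative thanks to $i\le t-1$), compute $b+c=2^{t-1}-1$, and invoke Lucas's theorem: the binary expansions $2^{t-1}-1=\sum_{j=0}^{t-2}2^j$ and $2^i-1=\sum_{j=0}^{i-1}2^j$ show that the representation of $2^i-1$ is bitwise dominated by that of $2^{t-1}-1$ (again using $i\le t-1$), so $\binom{2^{t-1}-1}{2^i-1}\equiv 1\pmod 2$. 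Hence the monomial $\widetilde w_2^{2^{t-1}-2^i}\widetilde w_3^{2^i-1}$ really occurs in $f_i$ with coefficient $1$, which simultaneously gives $f_i\ne 0$ and pins down $\mathrm{LM}(f_i)$.

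For the parenthetical last statement, I would observe that when $i=t-1$ the lower bound $c\ge 2^{t-1}-1$ combined with the degree identity $2b+3c=2^t-3+2^{t-1}=3(2^{t-1}-1)$ forces $c=2^{t-1}-1$ and $b=0$ as the only nonnegative integer solution. Thus $f_{t-1}$ reduces to the single monomial $\widetilde w_3^{2^{t-1}-1}$. There is no serious obstacle left once Lemma \ref{lowerbound} is in hand; the only points requiring care are applying Lucas correctly to confirm that $\binom{2^{t-1}-1}{2^i-1}$ is odd, and verifying that the lemma's bound is actually attained at $c=2^i-1$ for the specific value $r=2^t-3+2^i$.
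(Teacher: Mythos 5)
Your proposal is correct and follows essentially the same route as the paper: the lower bound $c\ge 2^i-1$ via Lemma \ref{lowerbound} applied with $s=2^{t-i}+1$, the verification that $\binom{2^{t-1}-1}{2^i-1}$ is odd by Lucas to show the bound is attained, and the degree count $3c\le 3(2^{t-1}-1)$ to pin down $f_{t-1}$. No substantive differences.
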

\begin{proof}
We have that
\[f_i=g_{2^t-3+2^i}=\sum_{2b+3c=2^t-3+2^i}{b+c\choose b}\widetilde w_2^b\widetilde w_3^c,\]
and so, we are looking for maximal $b$, or equivalently, minimal $c$, such that $2b+3c=2^t-3+2^i=2^i(2^{t-i}+1)-3$ and $\binom{b+c}{b}=\binom{b+c}{c}\equiv1\pmod2$.
According to Lemma \ref{lowerbound}, it is enough to prove that
$$\binom{2^{t-1}-2^i+2^i-1}{2^{i}-1}\equiv1\pmod2,$$
which is obvious from Lucas' formula.

As far as the second claim is concerned, if $\widetilde w_2^b\widetilde w_3^c$ is a monomial (with nonzero coefficient) in $f_{t-1}$, then $3c\leq2b+3c=2^t-3+2^{t-1}=3(2^{t-1}-1)$, i.e., $c\leq2^{t-1}-1$. By Lemma \ref{lowerbound}, $c\geq2^{t-1}-1$, and so, $f_{t-1}=\widetilde w_3^{2^{t-1}-1}$.
\end{proof}

Using induction on $i$, one can easily generalize identity (\ref{recgpolk3}), to obtain
\[g_{r+3\cdot2^i}=\widetilde w_2^{2^i}g_{r+2^i}+\widetilde w_3^{2^i}g_r, \quad i\ge0\]
(see \cite[(2.6)]{Korbas:ChRank}). Putting $r=2^t-3+2^i$, we get that for all nonnegative integers $i$,
\begin{equation}\label{lemspol1}
\widetilde w_3^{2^i}f_i+\widetilde w_2^{2^i}f_{i+1}=f_{i+2}.
\end{equation}

Our strategy is to use Theorem \ref{mreprgrebner}, so we need to work with $S$-polynomials of the polynomials $f_0,f_1,\ldots,f_{t-1}$. If $0\leq i\leq t-2$, note that, by Proposition \ref{propvodecimonomi}, the left-hand side in (\ref{lemspol1}) is actually the $S$-polynomial of $f_i$ and $f_{i+1}$. Therefore,
\begin{equation}\label{eqspol}
S(f_i,f_{i+1})=f_{i+2}, \qquad 0\leq i\leq t-2.
\end{equation}

The following lemma establishes crucial relations between $S$-polynomials of the polynomials $f_0,f_1,\ldots,f_{t-1}$.

\begin{lemma}\label{lemspol2}
For $0\leq i\leq j\leq t-2$ we have
\[S(f_i,f_{j+1})=\widetilde w_3^{2^j}S(f_i,f_j)+\widetilde w_2^{2^j-2^i}f_{j+2}.\]
\end{lemma}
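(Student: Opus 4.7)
The plan is a direct algebraic verification using the explicit leading monomials from Proposition \ref{propvodecimonomi} and the recurrence (\ref{lemspol1}); there is no real obstacle, only careful bookkeeping of exponents.

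First I would write down the leading monomials involved. For $i\le j\le t-2$, Proposition \ref{propvodecimonomi} gives $\mathrm{LM}(f_i)=\widetilde w_2^{2^{t-1}-2^i}\widetilde w_3^{2^i-1}$ and similarly for $f_j$, $f_{j+1}$. Since $2^{t-1}-2^i\ge2^{t-1}-2^j\ge2^{t-1}-2^{j+1}$ and $2^i-1\le2^j-1\le2^{j+1}-1$, the least common multiples are
\[\mathrm{lcm}(\mathrm{LM}(f_i),\mathrm{LM}(f_j))=\widetilde w_2^{2^{t-1}-2^i}\widetilde w_3^{2^j-1},\]
\[\mathrm{lcm}(\mathrm{LM}(f_i),\mathrm{LM}(f_{j+1}))=\widetilde w_2^{2^{t-1}-2^i}\widetilde w_3^{2^{j+1}-1}.\]
Working over $\mathbb Z_2$ (so $\mathrm{LT}=\mathrm{LM}$ and signs disappear), the definitions give
\[S(f_i,f_j)=\widetilde w_3^{2^j-2^i}f_i+\widetilde w_2^{2^j-2^i}f_j,\qquad S(f_i,f_{j+1})=\widetilde w_3^{2^{j+1}-2^i}f_i+\widetilde w_2^{2^{j+1}-2^i}f_{j+1}.\]

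Next I would compute the right-hand side of the claimed identity. Multiplying the first $S$-polynomial by $\widetilde w_3^{2^j}$ and adding $\widetilde w_2^{2^j-2^i}f_{j+2}$ yields
\[\widetilde w_3^{2^{j+1}-2^i}f_i+\widetilde w_2^{2^j-2^i}\bigl(\widetilde w_3^{2^j}f_j+f_{j+2}\bigr).\]
The key step is to apply (\ref{lemspol1}) with index $j$, which (over $\mathbb Z_2$) rearranges to $\widetilde w_3^{2^j}f_j+f_{j+2}=\widetilde w_2^{2^j}f_{j+1}$. Substituting in gives
\[\widetilde w_3^{2^{j+1}-2^i}f_i+\widetilde w_2^{2^{j+1}-2^i}f_{j+1}=S(f_i,f_{j+1}),\]
which is precisely the desired equality.

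The only subtlety is keeping track of the exponent $2^j-2^i$, which is nonnegative exactly because $i\le j$, so the expression $\widetilde w_2^{2^j-2^i}$ is a legitimate monomial; everything else is elementary manipulation.
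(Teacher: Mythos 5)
Your proof is correct and follows essentially the same route as the paper: both use Proposition \ref{propvodecimonomi} to write $S(f_i,f_j)=\widetilde w_3^{2^j-2^i}f_i+\widetilde w_2^{2^j-2^i}f_j$ explicitly and then invoke the relation (\ref{lemspol1}) to regroup terms. The only cosmetic difference is that you start from the right-hand side and simplify toward $S(f_i,f_{j+1})$, whereas the paper expands $S(f_i,f_{j+1})$ and regroups; the computation is the same.
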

\begin{proof}
By Proposition \ref{propvodecimonomi} we have
\[\mathrm{LM}(f_i)=\widetilde w_2^{2^{t-1}-2^i}\widetilde w_3^{2^i-1}, \quad \mathrm{LM}(f_j)=\widetilde w_2^{2^{t-1}-2^j}\widetilde w_3^{2^j-1},\]
from which it follows that
\[S(f_i,f_j)=\widetilde w_3^{2^j-2^i}f_i+\widetilde w_2^{2^j-2^i}f_j.\]
Now we use this and (\ref{lemspol1}) to calculate:
\begin{align*}
S(f_i,f_{j+1})=&\ \widetilde w_3^{2^{j+1}-2^i}f_i+\widetilde w_2^{2^{j+1}-2^i}f_{j+1}\\
=&\ \widetilde w_3^{2^{j+1}-2^i}f_i+\widetilde w_2^{2^j-2^i}\widetilde w_3^{2^j}f_j+\widetilde w_2^{2^j-2^i}\widetilde w_3^{2^j}f_j+\widetilde w_2^{2^{j+1}-2^i}f_{j+1}\\
=&\ \widetilde w_3^{2^j}(\widetilde w_3^{2^{j}-2^i}f_i+\widetilde w_2^{2^{j}-2^i}f_j)+\widetilde w_2^{2^{j}-2^i}(\widetilde w_3^{2^j}f_j+\widetilde w_2^{2^j}f_{j+1})\\
=&\ \widetilde w_3^{2^j}S(f_i,f_j)+\widetilde w_2^{2^{j}-2^i}f_{j+2},
\end{align*}
which is what we wanted to prove.
\end{proof}

We are now able to prove the reported result (cf.\ \cite[Theorem 4.7]{Fukaya}).

\begin{theorem}\label{grebnerimp*}
The set $\{f_0,f_1,\ldots,f_{t-1}\}$ is the reduced Gr\"obner basis for $I$ (with respect to the lexicographic order $\preceq$).
\end{theorem}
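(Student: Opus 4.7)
The plan is to apply the criterion of Theorem \ref{mreprgrebner}: it suffices to show that for every pair $0\le i<j\le t-1$ the $S$-polynomial $S(f_i,f_j)$ admits an $m$-representation with respect to $F=\{f_0,\ldots,f_{t-1}\}$ for some $m\prec\mathrm{lcm}(\mathrm{LM}(f_i),\mathrm{LM}(f_j))$. A crucial preliminary fact is that $f_t=g_{2^{t+1}-3}=0$: by Lemma \ref{lowerbound} applied with $i=t$, any nonzero monomial $\widetilde w_2^b\widetilde w_3^c$ in $f_t$ would satisfy $c\ge 2^t-1$, while the degree constraint $2b+3c=2^{t+1}-3$ forces $3c<3(2^t-1)$, a contradiction. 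This vanishing disposes of the otherwise delicate boundary case at $j+2=t$.

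I would fix $i$ and induct on $j$. For the base case $j=i+1$, equation \eqref{eqspol} gives $S(f_i,f_{i+1})=f_{i+2}$; if $i+2\le t-1$ then $f_{i+2}\in F$ and the trivial representation $1\cdot 1\cdot f_{i+2}$ is an $m$-representation with $m=\mathrm{LM}(f_{i+2})$, which by Proposition \ref{propvodecimonomi} is strictly smaller than $\mathrm{lcm}(\mathrm{LM}(f_i),\mathrm{LM}(f_{i+1}))$ since the $\widetilde w_2$-exponent drops from $2^{t-1}-2^i$ to $2^{t-1}-2^{i+2}$; the edge case $i=t-2$ is trivial because $S(f_{t-2},f_{t-1})=f_t=0$. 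For the inductive step, Lemma \ref{lemspol2} gives
\[S(f_i,f_{j+1})=\widetilde w_3^{2^j}S(f_i,f_j)+\widetilde w_2^{2^j-2^i}f_{j+2}.\]
Multiplying the inductive $m$-representation of $S(f_i,f_j)$ through by $\widetilde w_3^{2^j}$ produces a $(\widetilde w_3^{2^j}m)$-representation of the first summand, and the identity $\widetilde w_3^{2^j}\cdot\mathrm{lcm}(\mathrm{LM}(f_i),\mathrm{LM}(f_j))=\mathrm{lcm}(\mathrm{LM}(f_i),\mathrm{LM}(f_{j+1}))$ guarantees that $\widetilde w_3^{2^j}m$ remains strictly less than the new lcm. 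For the second summand, if $j+2\le t-1$ then $\widetilde w_2^{2^j-2^i}f_{j+2}$ is a single valid term whose leading monomial has strictly smaller $\widetilde w_2$-exponent than the new lcm, while if $j+2=t$ the term vanishes.

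For reducedness I would check that no $\mathrm{LM}(f_i)$ divides any monomial of $f_j$ with $j\ne i$, the leading coefficients being $1$ automatically over $\mathbb Z_2$. Any monomial $\widetilde w_2^b\widetilde w_3^c$ in $f_j$ divisible by $\widetilde w_2^{2^{t-1}-2^i}\widetilde w_3^{2^i-1}$ satisfies $2b+3c\ge 2^t+2^i-3$, but $2b+3c=2^t-3+2^j$ in $f_j$ forces $j\ge i$; for $j>i$, Lemma \ref{lowerbound} gives $c\ge 2^j-1$, hence $b\le 2^{t-1}-2^j<2^{t-1}-2^i$, contradicting divisibility. The main obstacle in the plan is the bookkeeping around the terminal index $j+2=t$, which is precisely where the vanishing $f_t=0$ resolves matters cleanly.
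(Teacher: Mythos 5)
Your argument is correct and follows essentially the same route as the paper's proof: the same criterion (Theorem \ref{mreprgrebner}), the same base case (\ref{eqspol}), and the same induction on $j$ driven by Lemma \ref{lemspol2}, with the paper merely packaging the induction as the explicit closed formula (\ref{Sfifj}) for $S(f_i,f_j)$ and then checking the leading monomials of that sum all at once. The only genuine (and harmless) deviations are that you rederive $f_t=g_{2^{t+1}-3}=0$ directly from Lemma \ref{lowerbound} where the paper cites Korba\v s, and that your reducedness check uses the degree and the bound $c\ge 2^j-1$ from Lemma \ref{lowerbound} in place of the paper's appeal to $\mathrm{LM}(f_i)\succ\mathrm{LM}(f_j)$.
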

\begin{proof}
Let $i$ and $j$ be integers such that $0\le i<j\le t-1$. According to Theorem \ref{mreprgrebner}, it is sufficient to find an $m$-representation of $S(f_i,f_j)$ with respect to $\{f_0,f_1,\ldots,f_{t-1}\}$, for some monomial $m\prec\mathrm{lcm}(\mathrm{LM}(f_i),\mathrm{LM}(f_j))=\widetilde w_2^{2^{t-1}-2^i}\widetilde w_3^{2^j-1}$ (if $S(f_i,f_j)\neq0$).

We shall prove that
\begin{equation}\label{Sfifj}
S(f_i,f_j)=\sum_{k=i+2}^{j+1}\widetilde w_2^{2^{k-2}-2^i}\widetilde w_3^{2^j-2^{k-1}}f_k.
\end{equation}
Note that for $j=t-1$, $f_{j+1}=f_t=g_{2^{t+1}-3}=0$ by \cite[Lemma 2.3(i)]{Korbas:ChRank}, so for all nonzero summands in (\ref{Sfifj}) we have $k\leq t-1$. Since
\begin{align*}
\mathrm{LM}\Big(\widetilde w_2^{2^{k-2}-2^i}\widetilde w_3^{2^j-2^{k-1}}f_k\Big)&=\widetilde w_2^{2^{k-2}-2^i}\widetilde w_3^{2^j-2^{k-1}}\mathrm{LM}(f_k)\\
&=\widetilde w_2^{2^{k-2}-2^i}\widetilde w_3^{2^j-2^{k-1}}\widetilde w_2^{2^{t-1}-2^k}\widetilde w_3^{2^k-1}\\
&=\widetilde w_2^{2^{t-1}-2^i-2^k+2^{k-2}}\widetilde w_3^{2^j+2^{k-1}-1}\\
&\prec\widetilde w_2^{2^{t-1}-2^i-1},
\end{align*}
for all $k\in\{i+2,\ldots,j+1\}$ such that $f_k\neq0$, we will have that, if $S(f_i,f_j)\neq0$, then (\ref{Sfifj}) is a desired $m$-representation, for $m=\widetilde w_2^{2^{t-1}-2^i-1}\prec\widetilde w_2^{2^{t-1}-2^i}\widetilde w_3^{2^j-1}$. So, we are left to prove (\ref{Sfifj}).

We fix $i$ and prove this by induction on $j$. For $j=i+1$, (\ref{Sfifj}) simplifies to (\ref{eqspol}). Now we suppose that the claim is true for some $j$ such that $i<j\le t-2$, and prove it for $j+1$. By Lemma \ref{lemspol2} and the induction hypothesis we have:
\begin{align*}
S(f_i,f_{j+1})=&\ \widetilde w_3^{2^j}S(f_i,f_j)+\widetilde w_2^{2^{j}-2^i}f_{j+2}\\
=&\ \widetilde w_3^{2^j}\sum_{k=i+2}^{j+1}\widetilde w_2^{2^{k-2}-2^i}\widetilde w_3^{2^j-2^{k-1}}f_k+\widetilde w_2^{2^{j}-2^i}f_{j+2}\\
=&\sum_{k=i+2}^{j+1}\widetilde w_2^{2^{k-2}-2^i}\widetilde w_3^{2^{j+1}-2^{k-1}}f_k+\widetilde w_2^{2^{j}-2^i}f_{j+2}\\
=&\sum_{k=i+2}^{j+2}\widetilde w_2^{2^{k-2}-2^i}\widetilde w_3^{2^{j+1}-2^{k-1}}f_k,
\end{align*}
and the proof of (\ref{Sfifj}) is complete.

Finally, if $i,j\in\{0,1,\ldots,t-1\}$ and $i<j$, then $\mathrm{LM}(f_j)$ cannot divide some monomial in $f_i$ since $2^t-3+2^i<2^t-3+2^j$ ($f_i$ is a homogeneous polynomial in degree $2^t-3+2^i$); and also $\mathrm{LM}(f_i)$ does not divide any of the monomials in $f_j$ since $\mathrm{LM}(f_i)\succ\mathrm{LM}(f_j)$ (Proposition \ref{propvodecimonomi}). So the Gr\"obner basis $\{f_0,f_1,\ldots,f_{t-1}\}$ is the reduced one.
\end{proof}

\subsection{Another Gr\"obner basis}

In \cite[Theorem B(1)]{BasuChakraborty} the authors gave the follo\-wing description of the cohomology algebra $H^*(\widetilde G_{2^t,3})$:
\[H^*(\widetilde G_{2^t,3})\cong\frac{\frac{\mathbb Z_2[\widetilde w_2,\widetilde w_3]}{(g_{2^t-2},g_{2^t-1})}\otimes_{\mathbb Z_2}\mathbb Z_2[a_{2^t-1}]}{(a_{2^t-1}^2-Pa_{2^t-1})},\]
where $a_{2^t-1}$ on the right-hand side corresponds to an "indecomposable element" (denoted by the same symbol) $a_{2^t-1}\in H^{2^t-1}(\widetilde G_{2^t,3})$, and $P$ is some unknown polynomial in $\widetilde w_2$ and $\widetilde w_3$. The algebra on the right-hand side is routinely identified with the quotient $\mathbb Z_2[\widetilde w_2,\widetilde w_3,a_{2^t-1}]/(g_{2^t-2},g_{2^t-1},a_{2^t-1}^2-Pa_{2^t-1})$, and therefore, we have the isomorphism of graded algebras:
\begin{equation}\label{H*}
H^*(\widetilde G_{2^t,3})\cong\frac{\mathbb Z_2[\widetilde w_2,\widetilde w_3,a_{2^t-1}]}{(g_{2^t-2},g_{2^t-1},a_{2^t-1}^2-Pa_{2^t-1})}.
\end{equation}
In the following section we shall prove that the polynomial $P$ vanishes; more precisely, that we can take $P$ to be the zero polynomial (in order for the isomorphism (\ref{H*}) to hold). But for now, we can establish that, whatever the polynomial $P$ is, the set $F:=\{f_0,f_1,\ldots,f_{t-1},a_{2^t-1}^2-Pa_{2^t-1}\}$ is a Gr\"obner basis for the ideal $J:=(g_{2^t-2},g_{2^t-1},a_{2^t-1}^2-Pa_{2^t-1})$ in $\mathbb Z_2[\widetilde w_2,\widetilde w_3,a_{2^t-1}]$.

First of all, let us extend the order on the monomials (defined above) from $\mathbb Z_2[\widetilde w_2,\widetilde w_3]$ to $\mathbb Z_2[\widetilde w_2,\widetilde w_3,a_{2^t-1}]$. We take the lexicographic order with $a_{2^t-1}\succ\widetilde w_2\succ\widetilde w_3$. It is obvious that this is an extension of the previously defined order on $\mathbb Z_2[\widetilde w_2,\widetilde w_3]$, and that
\begin{equation}\label{vodecimonom}
\mathrm{LM}(a_{2^t-1}^2-Pa_{2^t-1})=a_{2^t-1}^2,
\end{equation}
since $P$ is a polynomial in variables $\widetilde w_2$ and $\widetilde w_3$ only.

\begin{theorem}\label{grebnerH*}
The set $F$ is a Gr\"obner basis for $J$.
\end{theorem}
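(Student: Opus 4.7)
The plan is to apply Theorem \ref{mreprgrebner}: it suffices to show that for every pair $f,g\in F$, the $S$-polynomial $S(f,g)$ either vanishes or admits an $m$-representation with respect to $F$ for some $m\prec\mathrm{lcm}(\mathrm{LM}(f),\mathrm{LM}(g))$. The pairs split naturally into those already handled by Theorem \ref{grebnerimp*}, namely $(f_i,f_j)$ with $0\le i<j\le t-1$, and the new pairs $(f_i,h)$, where $h:=a_{2^t-1}^2-Pa_{2^t-1}$. For the first kind, formula \eqref{Sfifj} is precisely such an $m$-representation; every leading monomial and multiplier appearing there lies in $\mathbb Z_2[\widetilde w_2,\widetilde w_3]$, so the order comparisons made in the proof of Theorem \ref{grebnerimp*} remain valid after extension to the lex order with $a_{2^t-1}\succ\widetilde w_2\succ\widetilde w_3$, and no new work is needed.

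The heart of the argument is the new pairs $(f_i,h)$. The key observation is that by Proposition \ref{propvodecimonomi} and \eqref{vodecimonom}, $\mathrm{LM}(f_i)$ and $\mathrm{LM}(h)=a_{2^t-1}^2$ involve disjoint sets of variables, so $u_i:=\mathrm{lcm}(\mathrm{LM}(f_i),\mathrm{LM}(h))=\mathrm{LM}(f_i)\cdot a_{2^t-1}^2$. I would apply the standard ``coprime leading monomials'' computation: writing $f_i=\mathrm{LM}(f_i)+f_i'$ and $h=a_{2^t-1}^2+Pa_{2^t-1}$, a short direct calculation in characteristic two gives
\[S(f_i,h)=f_i'\cdot h+Pa_{2^t-1}\cdot f_i,\]
after the symmetric cross terms $f_i'\cdot Pa_{2^t-1}$ cancel modulo $2$. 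Expanding $f_i'$ and $P$ as sums of monomials in $\widetilde w_2,\widetilde w_3$ converts this into an honest $m$-representation whose summands have leading monomials either of the form $\mu\cdot a_{2^t-1}^2$ with $\mu\prec\mathrm{LM}(f_i)$, or of the form $\nu\cdot a_{2^t-1}\cdot\mathrm{LM}(f_i)$ with $\nu$ a monomial in $\widetilde w_2,\widetilde w_3$.

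Both shapes lie strictly below $u_i$: the first because $\mu\prec\mathrm{LM}(f_i)$ by construction, the second because, in the chosen lex order with $a_{2^t-1}$ largest, any monomial of $a_{2^t-1}$-degree $1$ is below the monomial $a_{2^t-1}^2\cdot\mathrm{LM}(f_i)$ of $a_{2^t-1}$-degree $2$. Theorem \ref{mreprgrebner} then immediately yields that $F$ is a Gr\"obner basis for $J$. I expect no serious obstacle: the only substantive step is the cross-term cancellation in characteristic two, which is routine, and crucially the whole argument requires nothing about $P$ beyond the fact---implicit in the statement \eqref{H*}---that $P$ lies in $\mathbb Z_2[\widetilde w_2,\widetilde w_3]$, which is exactly what makes $\mathrm{LM}(h)=a_{2^t-1}^2$ coprime to each $\mathrm{LM}(f_i)$.
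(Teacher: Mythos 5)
Your proposal is correct and follows essentially the same route as the paper: both split the pairs into the old ones $(f_i,f_j)$, which are disposed of by Theorem \ref{grebnerimp*}, and the new ones $(f_i,\,a_{2^t-1}^2-Pa_{2^t-1})$, which are disposed of by the coprimality of the leading monomials $\mathrm{LM}(f_i)$ and $a_{2^t-1}^2$. The only difference is cosmetic: the paper invokes the Buchberger criterion (Theorem \ref{buchberger}) together with \cite[Lemma 5.66]{Becker} for the coprime case, whereas you work with $m$-representations (Theorem \ref{mreprgrebner}) throughout and verify the coprime case by the explicit identity $S(f_i,h)=f_i'\,h+Pa_{2^t-1}f_i$, which is indeed valid in characteristic two and yields the required $m$-representation below $\mathrm{lcm}(\mathrm{LM}(f_i),a_{2^t-1}^2)$.
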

\begin{proof}
The proof relies on Theorem \ref{buchberger}. We already know that
\[J=(g_{2^t-2},g_{2^t-1},a_{2^t-1}^2-Pa_{2^t-1})=(f_0,f_1,\ldots,f_{t-1},a_{2^t-1}^2-Pa_{2^t-1}).\]
By Theorems \ref{grebnerimp*} and \ref{buchberger}, for all $i,j\in\{0,1,\ldots,t-1\}$, $S(f_i,f_j)$ reduces to zero modulo $\{f_0,f_1,\ldots,f_{t-1}\}$. On the other hand, from the definition of the reduction, it is obvious that if a polynomial reduces to zero modulo some subset of $F$, then it reduces to zero modulo $F$. Also, for $i\in\{0,1,\ldots,t-1\}$, from (\ref{vodecimonom}) and Proposition \ref{propvodecimonomi} we see that $\mathrm{LM}(a_{2^t-1}^2-Pa_{2^t-1})$ and $\mathrm{LM}(f_i)$ are relatively prime. According to \cite[Lemma 5.66]{Becker}, $S(f_i,a_{2^t-1}^2-Pa_{2^t-1})$ reduces to zero modulo $\{f_i,a_{2^t-1}^2-Pa_{2^t-1}\}$, and consequently, modulo $F$. By Theorem \ref{buchberger}, $F$ is a Gr\"obner basis for $J$.
\end{proof}

\begin{remark}
We cannot claim that the Gr\"obner basis $F$ is reduced, since the polynomial $P$ is unknown at the moment. However, if we take $P=0$ (in the following section it will be proved that (\ref{H*}) holds in that case), then it is obvious that $F$ is reduced.
\end{remark}

From this theorem, using (\ref{H*}) and Proposition \ref{ekvusl} (implication (1)$\Rightarrow$(3)), we can detect a vector space basis for the cohomology algebra $H^*(\widetilde G_{2^t,3})$. It is the set of all monomials in $a_{2^t-1}$, $\widetilde w_2$ and $\widetilde w_3$ not divisible by any of the leading monomials from $F$. Therefore, by (\ref{vodecimonom}) and Proposition \ref{propvodecimonomi}, we have the following corollary.

\begin{corollary}\label{B}
The set of cohomology classes
\[B=\Big\{a_{2^t-1}^r\widetilde w_2^b\widetilde w_3^c\mid r<2,\, \big(\forall i\in\{0,1,\ldots,t-1\}\big) \, \, b<2^{t-1}-2^i \, \, \vee \, \, c<2^i-1\Big\}\]
is a vector space basis of the cohomology algebra $H^*(\widetilde G_{2^t,3})$.
\end{corollary}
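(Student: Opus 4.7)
The plan is a direct application of the machinery assembled in the preceding subsection. Theorem \ref{grebnerH*} tells us that $F=\{f_0,f_1,\ldots,f_{t-1},a_{2^t-1}^2-Pa_{2^t-1}\}$ is a Gr\"obner basis for the ideal $J$ (with respect to the extended lexicographic order $a_{2^t-1}\succ\widetilde w_2\succ\widetilde w_3$), and the isomorphism (\ref{H*}) identifies $H^*(\widetilde G_{2^t,3})$ with the quotient $\mathbb Z_2[\widetilde w_2,\widetilde w_3,a_{2^t-1}]/J$. Hence implication (1)$\Rightarrow$(3) of Proposition \ref{ekvusl} immediately delivers a vector space basis, namely the cosets of all monomials in $\widetilde w_2,\widetilde w_3,a_{2^t-1}$ that are not divisible by any of the leading monomials of elements of $F$.

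The only remaining task is to translate the non-divisibility condition into the stated arithmetic inequalities. By (\ref{vodecimonom}) we have $\mathrm{LM}(a_{2^t-1}^2-Pa_{2^t-1})=a_{2^t-1}^2$, and by Proposition \ref{propvodecimonomi} we have $\mathrm{LM}(f_i)=\widetilde w_2^{2^{t-1}-2^i}\widetilde w_3^{2^i-1}$ for $0\leq i\leq t-1$. A monomial $a_{2^t-1}^r\widetilde w_2^b\widetilde w_3^c$ therefore fails to be divisible by $a_{2^t-1}^2$ precisely when $r<2$, and it fails to be divisible by $\mathrm{LM}(f_i)$ precisely when $b<2^{t-1}-2^i$ or $c<2^i-1$. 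Imposing these conditions simultaneously for all $i\in\{0,1,\ldots,t-1\}$ yields exactly the description of $B$ given in the statement.

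There is essentially no obstacle: once Theorem \ref{grebnerH*} is in hand, the corollary is a direct readout of the ``monomial basis'' consequence of the Gr\"obner basis property, combined with the explicit leading monomials already computed. The only thing worth a line of care is the observation that the non-divisibility condition coming from $f_i$ is correctly stated as a disjunction in the two exponents $b$ and $c$ (not a conjunction), which is the standard translation of $\widetilde w_2^{2^{t-1}-2^i}\widetilde w_3^{2^i-1}\nmid\widetilde w_2^b\widetilde w_3^c$.
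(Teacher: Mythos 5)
Your proposal is correct and follows exactly the paper's argument: apply Theorem \ref{grebnerH*} together with the isomorphism (\ref{H*}) and implication (1)$\Rightarrow$(3) of Proposition \ref{ekvusl}, then read off the leading monomials from (\ref{vodecimonom}) and Proposition \ref{propvodecimonomi} and translate non-divisibility into the stated disjunction of inequalities. Nothing is missing.
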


\begin{example}
In the (trivial) case $t=2$ we have that $\widetilde G_{4,3}\approx\widetilde G_{4,1}\approx S^3$. The Gr\"obner basis $F$ is now
\[F=\{f_0,f_1,a_3^2-Pa_3\}=\{g_2,g_3,a_3^2-Pa_3\}=\{\widetilde w_2,\widetilde w_3,a_3^2-Pa_3\}\]
(see Example \ref{exg}), and the additive basis of $H^*(\widetilde G_{4,3})$ is $B=\{1,a_3\}$.
\end{example}

\begin{example}
In the case $t=3$, using again Example \ref{exg}, we have
\[F=\{f_0,f_1,f_2,a_7^2-Pa_7\}=\{g_6,g_7,g_9,a_7^2-Pa_7\}=\{\widetilde w_2^3+\widetilde w_3^2,\widetilde w_2^2\widetilde w_3,\widetilde w_3^3,a_7^2-Pa_7\}\]
The additive basis of $H^*(\widetilde G_{8,3})$ is now (we sort the basis elements by their cohomological dimension)
\[B=\{1,\widetilde w_2,\widetilde w_3,\widetilde w_2^2,\widetilde w_2\widetilde w_3,\widetilde w_3^2,a_7,\widetilde w_2\widetilde w_3^2,a_7\widetilde w_2,a_7\widetilde w_3,a_7\widetilde w_2^2,a_7\widetilde w_2\widetilde w_3,a_7\widetilde w_3^2,a_7\widetilde w_2\widetilde w_3^2\}\]
(cf.\ \cite[Proposition 3.1(3)]{KorbasRusin:Palermo}). For instance,
\[H^7(\widetilde G_{8,3})=\mathbb Z_2\langle a_7\rangle, H^8(\widetilde G_{8,3})=\mathbb Z_2\langle\widetilde w_2\widetilde w_3^2\rangle, H^9(\widetilde G_{8,3})=\mathbb Z_2\langle a_7\widetilde w_2\rangle,\]
\[H^{13}(\widetilde G_{8,3})=\mathbb Z_2\langle a_7\widetilde w_3^2\rangle, H^{14}(\widetilde G_{8,3})=0, H^{15}(\widetilde G_{8,3})=\mathbb Z_2\langle a_7\widetilde w_2\widetilde w_3^2\rangle.\]
\end{example}

\begin{example}
For $t=4$, we work with the Grassmann manifold $\widetilde G_{16,3}$, whose dimension is $(16-3)\cdot3=39$. The leading monomials of polynomials in $F=\{f_0,f_1,f_2,f_3,a_{15}^2-Pa_{15}\}$ are the following (see (\ref{vodecimonom}) and Proposition \ref{propvodecimonomi}):
\[\mathrm{LM}(F)=\{\widetilde w_2^7,\widetilde w_2^6\widetilde w_3,\widetilde w_2^4\widetilde w_3^3,\widetilde w_3^7,a_{15}^2\}.\]
Again sorting the basis elements by their cohomological dimension we get that
\begin{align*}
B=\big\{&1,\widetilde w_2,\widetilde w_3,\widetilde w_2^2,\widetilde w_2\widetilde w_3,\widetilde w_2^3,\widetilde w_3^2,\widetilde w_2^2\widetilde w_3,\widetilde w_2^4,\widetilde w_2\widetilde w_3^2,\widetilde w_2^3\widetilde w_3,\widetilde w_3^3,\widetilde w_2^5,\widetilde w_2^2\widetilde w_3^2,\widetilde w_2^4\widetilde w_3,\widetilde w_2\widetilde w_3^3,\\
&\widetilde w_2^6,\widetilde w_2^3\widetilde w_3^2,\widetilde w_3^4,\widetilde w_2^5\widetilde w_3,\widetilde w_2^2\widetilde w_3^3,\widetilde w_2^4\widetilde w_3^2,\widetilde w_2\widetilde w_3^4,a_{15},\widetilde w_2^3\widetilde w_3^3,\widetilde w_3^5,\widetilde w_2^5\widetilde w_3^2,\widetilde w_2^2\widetilde w_3^4,a_{15}\widetilde w_2,\\
&\widetilde w_2\widetilde w_3^5,a_{15}\widetilde w_3,\widetilde w_2^3\widetilde w_3^4,\widetilde w_3^6,a_{15}\widetilde w_2^2,\widetilde w_2^2\widetilde w_3^5,a_{15}\widetilde w_2\widetilde w_3,\widetilde w_2\widetilde w_3^6,a_{15}\widetilde w_2^3,a_{15}\widetilde w_3^2,\widetilde w_2^3\widetilde w_3^5,\\
&a_{15}\widetilde w_2^2\widetilde w_3,\widetilde w_2^2\widetilde w_3^6,a_{15}\widetilde w_2^4,a_{15}\widetilde w_2\widetilde w_3^2,a_{15}\widetilde w_2^3\widetilde w_3,a_{15}\widetilde w_3^3,\widetilde w_2^3\widetilde w_3^6,a_{15}\widetilde w_2^5,a_{15}\widetilde w_2^2\widetilde w_3^2,\\
&a_{15}\widetilde w_2^4\widetilde w_3,a_{15}\widetilde w_2\widetilde w_3^3,a_{15}\widetilde w_2^6,a_{15}\widetilde w_2^3\widetilde w_3^2,a_{15}\widetilde w_3^4,a_{15}\widetilde w_2^5\widetilde w_3,a_{15}\widetilde w_2^2\widetilde w_3^3,a_{15}\widetilde w_2^4\widetilde w_3^2,\\
&a_{15}\widetilde w_2\widetilde w_3^4,a_{15}\widetilde w_2^3\widetilde w_3^3,a_{15}\widetilde w_3^5,a_{15}\widetilde w_2^5\widetilde w_3^2,a_{15}\widetilde w_2^2\widetilde w_3^4,a_{15}\widetilde w_2\widetilde w_3^5,a_{15}\widetilde w_2^3\widetilde w_3^4,a_{15}\widetilde w_3^6,\\
&a_{15}\widetilde w_2^2\widetilde w_3^5,a_{15}\widetilde w_2\widetilde w_3^6,a_{15}\widetilde w_2^3\widetilde w_3^5,a_{15}\widetilde w_2^2\widetilde w_3^6,a_{15}\widetilde w_2^3\widetilde w_3^6\big\}
\end{align*}
is a vector space basis for $H^*(\widetilde G_{16,3})$. For instance,
\begin{align*}
H^{15}(\widetilde G_{16,3})&=\mathbb Z_2\langle a_{15}\rangle\oplus\mathbb Z_2\langle\widetilde w_2^3\widetilde w_3^3\rangle\oplus\mathbb Z_2\langle\widetilde w_3^5\rangle, H^{16}(\widetilde G_{16,3})=\mathbb Z_2\langle\widetilde w_2^5\widetilde w_3^2\rangle\oplus\mathbb Z_2\langle\widetilde w_2^2\widetilde w_3^4\rangle,\\
H^{17}(\widetilde G_{16,3})&=\mathbb Z_2\langle a_{15}\widetilde w_2\rangle\oplus\mathbb Z_2\langle\widetilde w_2\widetilde w_3^5\rangle, H^{30}(\widetilde G_{16,3})=\mathbb Z_2\langle a_{15}\widetilde w_2^3\widetilde w_3^3\rangle\oplus\mathbb Z_2\langle a_{15}\widetilde w_3^5\rangle,\\
H^{31}(\widetilde G_{16,3})&=\mathbb Z_2\langle a_{15}\widetilde w_2^5\widetilde w_3^2\rangle\oplus\mathbb Z_2\langle a_{15}\widetilde w_2^2\widetilde w_3^4\rangle,
H^{32}(\widetilde G_{16,3})=\mathbb Z_2\langle a_{15}\widetilde w_2\widetilde w_3^5\rangle,\\
H^{37}(\widetilde G_{16,3})&=\mathbb Z_2\langle a_{15}\widetilde w_2^2\widetilde w_3^6\rangle, H^{38}(\widetilde G_{16,3})=0, H^{39}(\widetilde G_{16,3})=\mathbb Z_2\langle a_{15}\widetilde w_2^3\widetilde w_3^6\rangle.
\end{align*}
\end{example}

\subsection{Some calculations in $H^*(\widetilde G_{2^t,3})$}

Recall that the height of a cohomology class $\sigma$ is the maximal integer $m$ with the property $\sigma^m\neq0$.

\begin{lemma}\label{visina}
The height of the class $\widetilde w_3\in H^*(\widetilde G_{2^t,3})$ is $2^{t-1}-2$.
\end{lemma}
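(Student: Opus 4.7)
The plan is to verify the two required (in)equalities directly using the results already established about the Gröbner basis.

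For the upper bound $\widetilde w_3^{2^{t-1}-1}=0$, I would invoke Proposition \ref{propvodecimonomi}, which asserts the identity $f_{t-1}=\widetilde w_3^{2^{t-1}-1}$ as a polynomial (not just a leading term). Since $f_{t-1}\in I\subseteq J$, this monomial is zero when passed to the quotient $H^*(\widetilde G_{2^t,3})$ via the isomorphism (\ref{H*}).

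For the nonvanishing $\widetilde w_3^{2^{t-1}-2}\neq0$, the natural strategy is to check that the monomial $\widetilde w_3^{2^{t-1}-2}$ (corresponding to the exponents $r=0$, $b=0$, $c=2^{t-1}-2$) belongs to the additive basis $B$ provided by Corollary \ref{B}. The clause $r<2$ is trivial. For the indexed condition, I would split on $i$: when $i\in\{0,1,\ldots,t-2\}$, the inequality $b=0<2^{t-1}-2^i$ holds since $i<t-1$; and when $i=t-1$, the first disjunct fails (as $2^{t-1}-2^{t-1}=0$), but the second holds because $c=2^{t-1}-2<2^{t-1}-1=2^i-1$. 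Hence $\widetilde w_3^{2^{t-1}-2}\in B$, so it is a nonzero basis element of $H^*(\widetilde G_{2^t,3})$.

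Combining the two steps gives the exact height $2^{t-1}-2$. There is essentially no obstacle here: the Gröbner basis work in Section \ref{groebner} has already done all the heavy lifting, and the lemma is a one-line bookkeeping check against the basis $B$.
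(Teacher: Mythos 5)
Your proposal is correct and follows essentially the same route as the paper: the vanishing of $\widetilde w_3^{2^{t-1}-1}$ comes from $f_{t-1}=\widetilde w_3^{2^{t-1}-1}\in J$ via Proposition \ref{propvodecimonomi}, and the nonvanishing of $\widetilde w_3^{2^{t-1}-2}$ from its membership in the additive basis $B$ of Corollary \ref{B}. The only difference is that you spell out the case check on $i$ verifying membership in $B$, which the paper leaves implicit.
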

\begin{proof}
The polynomial $f_{t-1}$ is a polynomial in the ideal $J=(g_{2^t-2},g_{2^t-1},a_{2^t-1}^2-Pa_{2^t-1})\trianglelefteq\mathbb Z_2[\widetilde w_2,\widetilde w_3,a_{2^t-1}]$, and so $f_{t-1}=0$ in $H^*(\widetilde G_{2^t,3})$ (by (\ref{H*})). On the other hand, according to Proposition \ref{propvodecimonomi}, $f_{t-1}=\widetilde w_3^{2^{t-1}-1}$, so we have that
$\widetilde w_3^{2^{t-1}-1}=0$ in $H^*(\widetilde G_{2^t,3})$.

The class $\widetilde w_3^{2^{t-1}-2}$ is an element of the additive basis $B$ from Corollary \ref{B}, which implies that $\widetilde w_3^{2^{t-1}-2}\neq0$ in $H^*(\widetilde G_{2^t,3})$, completing the proof of the lemma.
\end{proof}

The following proposition concerns the subalgebra $\im p^*$ of the algebra $H^*(\widetilde G_{2^t,3})$. This is the subalgebra generated by the Stiefel--Whitney classes $\widetilde w_2$ and $\widetilde w_3$. Note that the dimension of the manifold $\widetilde G_{2^t,3}$ is $3\cdot(2^t-3)=3\cdot2^t-9$, and so the graded algebra $H^*(\widetilde G_{2^t,3})$ is concentrated in the degrees (cohomological dimensions) from $0$ to $3\cdot2^t-9$. The proposition states that, roughly, in the first third of this range all cohomology classes belong to $\im p^*$, and in the last third there are no nonzero cohomology classes in $\im p^*$.

\begin{proposition}\label{propimp*}
Let $\im p^*$ be the subalgebra of $H^*(\widetilde G_{2^t,3})$ induced by the covering map $p:\widetilde G_{2^t,3}\rightarrow G_{2^t,3}$.
\begin{itemize}
\item[(a)] If $j<2^t-1$, then $H^j(\widetilde G_{2^t,3})\subset\im p^*$.
\item[(b)] If $j>2\cdot2^t-8$, then $H^j(\widetilde G_{2^t,3})\cap\im p^*=0$.
\end{itemize}
\end{proposition}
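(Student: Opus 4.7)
The plan is to read both parts off the additive basis $B$ for $H^*(\widetilde G_{2^t,3})$ given in Corollary \ref{B}. By Theorem \ref{grebnerimp*} together with Proposition \ref{ekvusl}((1)$\Rightarrow$(3)) applied to the isomorphism (\ref{imp*}), the subset $B_0 \subset B$ consisting of monomials $\widetilde w_2^b \widetilde w_3^c$ (i.e.\ those basis elements with $r=0$) is itself an additive basis of $\im p^*$. Both parts of the proposition therefore reduce to comparing the degrees that can be supported by $B_0$ and by $B \setminus B_0$.

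For part (a), I would observe that every element of $B \setminus B_0$ has the form $a_{2^t-1}\widetilde w_2^b \widetilde w_3^c$, so its cohomological degree is at least $2^t-1$. Hence in any degree $j < 2^t-1$ only elements of $B_0$ can appear, which immediately gives $H^j(\widetilde G_{2^t,3})\subset \im p^*$.

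For part (b), I aim at the quantitative statement that every basis monomial $\widetilde w_2^b \widetilde w_3^c \in B_0$ satisfies $2b + 3c \le 2^{t+1} - 8 = 2\cdot 2^t - 8$. The defining condition from $B$ says that for every $i \in \{0,1,\ldots,t-1\}$ either $b < 2^{t-1} - 2^i$ or $c < 2^i - 1$. Taking $i = t-1$ already forces $c \le 2^{t-1} - 2$, so there is a unique $i^* \in \{0, 1, \ldots, t-2\}$ with $2^{i^*} - 1 \le c \le 2^{i^*+1} - 2$. At this $i^*$ the $c$-alternative fails, pinning down $b \le 2^{t-1} - 2^{i^*} - 1$. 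A short computation then yields $2b + 3c \le 2^t + 2^{i^*+2} - 8 \le 2^{t+1} - 8$, the last inequality coming from $i^* \le t - 2$. This forbids any nonzero element of $\im p^*$ from living in degrees $j > 2 \cdot 2^t - 8$.

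There is no real obstacle beyond the bookkeeping on $B$; the proof is essentially combinatorial. The only step that requires a moment's thought is the arithmetic estimate in (b), namely choosing $i^*$ so that $c$ is squeezed into the dyadic interval $[2^{i^*} - 1,\, 2^{i^*+1} - 2]$ and using the bound $i^* \le t-2$ that is forced by $c \le 2^{t-1} - 2$. After that, the entire argument collapses to the elementary inequality $2^{i^*+2} \le 2^t$.
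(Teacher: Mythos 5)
Your argument is correct, and for part (b) it takes a genuinely different route from the paper. Part (a) is the same observation in both proofs. For part (b), the paper argues by contradiction via Poincar\'e duality: a nonzero $\sigma\in H^j(\widetilde G_{2^t,3})\cap\im p^*$ with $j>2\cdot2^t-8$ pairs nontrivially with some $\tau$ in degree $3\cdot2^t-9-j<2^t-1$, which lies in $\im p^*$ by part (a); the resulting nonzero top-degree class in $\im p^*$ would force $\widetilde w_3^{2^t-3}=p^*(w_3^{2^t-3})\neq0$ (using that $H^{3\cdot2^t-9}(G_{2^t,3})$ is generated by $w_3^{2^t-3}$), contradicting Lemma \ref{visina}. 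You instead bound directly the degrees supported by the monomial basis $B_0$ of $\im p^*$ (and your identification of $B_0$ as a basis of $\im p^*$ via Theorem \ref{grebnerimp*}, Proposition \ref{ekvusl} and (\ref{imp*}) is legitimate). Your dyadic bookkeeping checks out: $i=t-1$ forces $c\le2^{t-1}-2$, the unique $i^*$ with $2^{i^*}-1\le c\le2^{i^*+1}-2$ pins down $b\le2^{t-1}-2^{i^*}-1$, and $2b+3c\le2^t+2^{i^*+2}-8\le2^{t+1}-8$; moreover the bound is attained at $i^*=t-2$ by $\widetilde w_2^{2^{t-2}-1}\widetilde w_3^{2^{t-1}-2}$, so it is sharp. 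Your route is more elementary and self-contained, avoiding Poincar\'e duality, the description of the top cohomology of $G_{2^t,3}$, and Lemma \ref{visina}, at the cost of a combinatorial computation; the paper's route is shorter given those inputs and makes visible the duality between the ranges in (a) and (b).
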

\begin{proof}
In the additive basis $B$ (from Corollary \ref{B}) the element of the smallest cohomological dimension which is not of the form $\widetilde w_2^b\widetilde w_3^c$ is $a_{2^t-1}$, whose dimension is $2^t-1$. This proves (a).

For (b), assume to the contrary that there is a nonzero class $\sigma\in H^j(\widetilde G_{2^t,3})\cap\im p^*$ for some $j>2\cdot2^t-8$. By the Poincar\'e duality, there exists a class $\tau\in H^{3\cdot2^t-9-j}(\widetilde G_{2^t,3})$ such that $\sigma\tau\neq0$ in $H^{3\cdot2^t-9}(\widetilde G_{2^t,3})$. But $3\cdot2^t-9-j<3\cdot2^t-9-(2\cdot2^t-8)=2^t-1$, and (a) implies that $\tau\in\im p^*$ as well. We conclude that $\sigma\tau$, the nonzero element of $H^{3\cdot2^t-9}(\widetilde G_{2^t,3})$, also belongs to $\im p^*$, which means that $p^*:H^{3\cdot2^t-9}(G_{2^t,3})\rightarrow H^{3\cdot2^t-9}(\widetilde G_{2^t,3})$ is nontrivial. On the other hand, it is well known that $H^{3\cdot2^t-9}(G_{2^t,3})=\mathbb Z_2\langle w_3^{2^t-3}\rangle$, where $w_3=w_3(\gamma_{2^t,3})$ is the third Stiefel--Whitney class of the canonical bundle $\gamma_{2^t,3}$ over $G_{2^t,3}$ (see e.g.\ \cite{Jaworowski}). Finally, we get that $\widetilde w_3^{2^t-3}=p^*(w_3^{2^t-3})\neq0$, contradicting Lemma \ref{visina}.
\end{proof}

\section{Proof of Theorem \ref{thm1}}
\label{proof}

We are going to investigate the action of the Steenrod squares $Sq^1$ and $Sq^2$ on $H^*(\widetilde G_{2^t,3})$. Therefore, it will be convenient to have the formulas from the next lemma at our disposal.

\begin{lemma}\label{Sq1Sq2}
In $H^*(\widetilde G_{2^t,3})$ the following identities hold (for all nonnegative integers $b$ and $c$):
\begin{itemize}
\item[(a)] $Sq^1(\widetilde w_2^b\widetilde w_3^c)=b\widetilde w_2^{b-1}\widetilde w_3^{c+1}$;
\item[(b)] $Sq^2(\widetilde w_2^b\widetilde w_3^c)=(b+c)\widetilde w_2^{b+1}\widetilde w_3^c+\binom{b}{2}\widetilde w_2^{b-2}\widetilde w_3^{c+2}$.
\end{itemize}
\end{lemma}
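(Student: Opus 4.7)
The plan is to reduce everything to the action of $Sq^1$ and $Sq^2$ on the generators $\widetilde w_2$ and $\widetilde w_3$, and then propagate this through monomials using the Cartan formula and induction. We recall that $\widetilde w_1=0$ in $H^*(\widetilde G_{2^t,3})$ (since the tautological bundle $\widetilde\gamma_{2^t,3}$ is orientable) and that $\widetilde w_j=0$ for $j>3$ (since $\widetilde\gamma_{2^t,3}$ has rank $3$). Applying Wu's formula
\[Sq^i(w_j)=\sum_{k=0}^i\binom{j+k-i-1}{k}w_{i-k}w_{j+k}\]
to $w_2(\widetilde\gamma_{2^t,3})$ and $w_3(\widetilde\gamma_{2^t,3})$ and substituting $\widetilde w_1=\widetilde w_4=\widetilde w_5=0$ immediately yields
\[Sq^1(\widetilde w_2)=\widetilde w_3,\ \ Sq^1(\widetilde w_3)=0,\ \ Sq^2(\widetilde w_2)=\widetilde w_2^2,\ \ Sq^2(\widetilde w_3)=\widetilde w_2\widetilde w_3.\]

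For part (a), since $Sq^1$ is the Bockstein it acts as a derivation, and a routine induction on $b$ gives $Sq^1(\widetilde w_2^b)=b\widetilde w_2^{b-1}\widetilde w_3$. Combined with $Sq^1(\widetilde w_3^c)=c\widetilde w_3^{c-1}Sq^1(\widetilde w_3)=0$, the Leibniz rule applied to the product $\widetilde w_2^b\cdot\widetilde w_3^c$ produces the stated formula at once.

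For part (b), I will first handle the one-variable cases $Sq^2(\widetilde w_2^b)$ and $Sq^2(\widetilde w_3^c)$ separately by induction, using the Cartan formula in the form
\[Sq^2(xy)=Sq^2(x)\,y+Sq^1(x)\,Sq^1(y)+x\,Sq^2(y).\]
For $\widetilde w_3^c$, since $Sq^1(\widetilde w_3)=0$ the middle term drops out and the induction immediately gives $Sq^2(\widetilde w_3^c)=c\widetilde w_2\widetilde w_3^c$. For $\widetilde w_2^b$, the middle term contributes $\widetilde w_3\cdot(b-1)\widetilde w_2^{b-2}\widetilde w_3=(b-1)\widetilde w_2^{b-2}\widetilde w_3^2$, and the induction (using the Pascal identity $(b-1)+\binom{b-1}{2}=\binom{b}{2}$) gives $Sq^2(\widetilde w_2^b)=b\widetilde w_2^{b+1}+\binom{b}{2}\widetilde w_2^{b-2}\widetilde w_3^2$.

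Finally, applying the Cartan formula once more to $\widetilde w_2^b\cdot\widetilde w_3^c$, the cross term $Sq^1(\widetilde w_2^b)Sq^1(\widetilde w_3^c)$ vanishes (the second factor is zero), and combining the two one-variable computations collects the terms $b\widetilde w_2^{b+1}\widetilde w_3^c$ from $Sq^2(\widetilde w_2^b)\widetilde w_3^c$ and $c\widetilde w_2^{b+1}\widetilde w_3^c$ from $\widetilde w_2^bSq^2(\widetilde w_3^c)$ into $(b+c)\widetilde w_2^{b+1}\widetilde w_3^c$, leaving the $\binom{b}{2}\widetilde w_2^{b-2}\widetilde w_3^{c+2}$ tail untouched. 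The only mildly delicate step is the Pascal-identity bookkeeping in the inductive computation of $Sq^2(\widetilde w_2^b)$; everything else is mechanical.
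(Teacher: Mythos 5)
Your proof is correct and follows essentially the same route as the paper's: Wu's formula gives the action of $Sq^1$ and $Sq^2$ on $\widetilde w_2$ and $\widetilde w_3$, and the Cartan formula plus induction propagates this to monomials. The only (cosmetic) difference is that you compute $Sq^2(\widetilde w_2^b)$ and $Sq^2(\widetilde w_3^c)$ separately and then combine, whereas the paper runs the induction on $b$ directly for the mixed monomial $\widetilde w_2^b\widetilde w_3^c$; the Pascal-identity step $\binom{b-1}{1}+\binom{b-1}{2}=\binom{b}{2}$ you flag is handled identically in both.
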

\begin{proof}
Throughout the proof we will repeatedly use the formulas of Cartan and Wu (see e.g.\ \cite[p.\ 91 and p.\ 94]{MilnorSt}). By the Wu formula:
\[Sq^1(\widetilde w_2)=\widetilde w_3, \quad Sq^1(\widetilde w_3)=0, \quad Sq^2(\widetilde w_2)=\widetilde w_2^2, \quad Sq^2(\widetilde w_3)=\widetilde w_2\widetilde w_3.\]
We prove both formulas by induction on $b$.

(a) Since $Sq^1(\widetilde w_3)=0$, the formula is true for $b=0$. For $b\geq1$, assuming the formula to be true when the exponent of $\widetilde w_2$ is less then $b$, we have
\begin{align*}
Sq^1(\widetilde w_2^b\widetilde w_3^c)&=Sq^1(\widetilde w_2\widetilde w_2^{b-1}\widetilde w_3^c)=Sq^1(\widetilde w_2)\widetilde w_2^{b-1}\widetilde w_3^c+\widetilde w_2Sq^1(\widetilde w_2^{b-1}\widetilde w_3^c)\\
&=\widetilde w_3\widetilde w_2^{b-1}\widetilde w_3^c+(b-1)\widetilde w_2\widetilde w_2^{b-2}\widetilde w_3^{c+1}=b\widetilde w_2^{b-1}\widetilde w_3^{c+1}.
\end{align*}

(b) Suppose first that $b=0$. We want to prove that $Sq^2(\widetilde w_3^c)=c\widetilde w_2\widetilde w_3^c$, which is obviously true for $c=0$ and $c=1$. Proceeding by induction on $c$, we obtain:
\begin{align*}
Sq^2(\widetilde w_3^c)&=Sq^2(\widetilde w_3\widetilde w_3^{c-1})=Sq^2(\widetilde w_3)\widetilde w_3^{c-1}+Sq^1(\widetilde w_3)Sq^1(\widetilde w_3^{c-1})+\widetilde w_3Sq^2(\widetilde w_3^{c-1})\\
&=\widetilde w_2\widetilde w_3\widetilde w_3^{c-1}+(c-1)\widetilde w_3\widetilde w_2\widetilde w_3^{c-1}=c\widetilde w_2\widetilde w_3^c,
\end{align*}
completing the proof for $b=0$.

For the induction step, we have:
\begin{align*}
Sq^2(\widetilde w_2^b\widetilde w_3^c)=&\,Sq^2(\widetilde w_2\widetilde w_2^{b-1}\widetilde w_3^c)\\
=&\,Sq^2(\widetilde w_2)\widetilde w_2^{b-1}\widetilde w_3^c+Sq^1(\widetilde w_2)Sq^1(\widetilde w_2^{b-1}\widetilde w_3^c)+\widetilde w_2Sq^2(\widetilde w_2^{b-1}\widetilde w_3^c)\\
=&\,\widetilde w_2^2\widetilde w_2^{b-1}\widetilde w_3^c+(b-1)\widetilde w_3\widetilde w_2^{b-2}\widetilde w_3^{c+1}\\
&+\widetilde w_2\Big((b-1+c)\widetilde w_2^b\widetilde w_3^c+\mbox{$\binom{b-1}{2}$}\widetilde w_2^{b-3}\widetilde w_3^{c+2}\Big)\\
=&\,(b+c)\widetilde w_2^{b+1}\widetilde w_3^c+\mbox{$\binom{b}{2}$}\widetilde w_2^{b-2}\widetilde w_3^{c+2},
\end{align*}
and we are done.
\end{proof}

Recall that $a_{2^t-1}\in H^{2^t-1}(\widetilde G_{2^t,3})$ is a generator of the algebra $H^*(\widetilde G_{2^t,3})$ (see (\ref{H*})).

\begin{lemma}\label{Sq1Sq2a}
Let $\im p^*$ be the subalgebra of $H^*(\widetilde G_{2^t,3})$ induced by the covering map $p:\widetilde G_{2^t,3}\rightarrow G_{2^t,3}$.
\begin{itemize}
\item[(a)] $Sq^1(a_{2^t-1})\in\im p^*$.
\item[(b)] $Sq^2(a_{2^t-1})\in\im p^*$.
\end{itemize}
\end{lemma}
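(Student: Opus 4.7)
For (a), I would appeal directly to the additive basis $B$ of Corollary \ref{B}: in degree $2^t$, a basis monomial $a_{2^t-1}^r\widetilde w_2^b\widetilde w_3^c$ with $r=1$ would require $2b+3c=1$, which has no nonnegative integer solutions. Hence $H^{2^t}(\widetilde G_{2^t,3}) \subset \im p^*$, and so $Sq^1(a_{2^t-1}) \in \im p^*$ automatically.

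For (b), the case $t=2$ is trivial since $\widetilde G_{4,3} \approx S^3$ has $H^5=0$, so assume $t \geq 3$. The same basis inspection shows the only element of $B$ in degree $2^t+1$ with $r=1$ is $a_{2^t-1}\widetilde w_2$, so we may write $Sq^2(a_{2^t-1}) = Q + \alpha\, a_{2^t-1}\widetilde w_2$ with $Q \in \im p^*$ and $\alpha \in \mathbb Z_2$; the task is to show $\alpha=0$. My plan is to multiply both sides by the test class $y := \widetilde w_2^{2^{t-2}-2}\widetilde w_3^{2^{t-1}-2}$. A verification against the conditions of Corollary \ref{B} shows that $a_{2^t-1}\widetilde w_2 \cdot y = a_{2^t-1}\widetilde w_2^{2^{t-2}-1}\widetilde w_3^{2^{t-1}-2}$ is the unique generator of $H^n(\widetilde G_{2^t,3}) \cong \mathbb Z_2$ (where $n = 3\cdot 2^t - 9$), while Proposition \ref{propimp*}(b) forces $Q\cdot y = 0$. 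Thus $\alpha=0$ is equivalent to showing $Sq^2(a_{2^t-1})\cdot y = 0$.

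To prove this vanishing, I would combine the Cartan and Wu formulas. Using Lemma \ref{Sq1Sq2} together with $\widetilde w_3^{2^{t-1}-1}=0$ (Lemma \ref{visina}), one checks that $Sq^1(y)=Sq^2(y)=0$: the relevant integer coefficients $2^{t-2}-2$ and $3\cdot 2^{t-2}-4$ are even for $t\geq 3$, and the one surviving monomial involves $\widetilde w_3^{2^{t-1}}=0$. Hence the Cartan expansion of $Sq^2(a_{2^t-1}\,y)$ collapses to $Sq^2(a_{2^t-1})\cdot y$. Since $a_{2^t-1}\,y$ sits in degree $n-2$, the Wu formula gives $Sq^2(a_{2^t-1}\,y) = v_2 \cdot a_{2^t-1}\,y$, where $v_2 = w_2(\tau)$ is the second Wu class of the oriented manifold $\widetilde G_{2^t,3}$. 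The step I expect to be the main obstacle is establishing $v_2 = 0$: from $\tau \cong \widetilde\gamma\otimes\widetilde\gamma^\perp$ and the splitting $\widetilde\gamma\otimes\widetilde\gamma \oplus \tau \cong \widetilde\gamma^{\oplus 2^t}$ one gets $w(\widetilde\gamma\otimes\widetilde\gamma)\cdot w(\tau) = w(\widetilde\gamma)^{2^t}$; the splitting principle with $\widetilde w_1=0$ gives $w(\widetilde\gamma\otimes\widetilde\gamma)=(1+\widetilde w_2+\widetilde w_3)^2=1+\widetilde w_2^2+\widetilde w_3^2$, while Frobenius mod $2$ gives $w(\widetilde\gamma)^{2^t}=1+\widetilde w_2^{2^t}+\widetilde w_3^{2^t}$; neither side contributes in degree $2$, forcing $w_2(\tau)=0$ and hence $\alpha=0$.
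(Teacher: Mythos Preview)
Your argument is correct and follows essentially the same route as the paper: both proofs write $Sq^2(a_{2^t-1})=\beta\,a_{2^t-1}\widetilde w_2+W$, pair against the same test class $y=\widetilde w_2^{2^{t-2}-2}\widetilde w_3^{2^{t-1}-2}$, and use the Wu formula together with $v_2=0$ and Proposition~\ref{propimp*}(b) to force $\beta=0$. The only differences are cosmetic: you streamline the Cartan expansion by first checking $Sq^1(y)=Sq^2(y)=0$ (the paper instead kills those cross terms a posteriori via part~(a) and Proposition~\ref{propimp*}(b)), and you compute $w_2(\tau)=0$ directly from $\widetilde\gamma\otimes\widetilde\gamma\oplus\tau\cong\widetilde\gamma^{\oplus 2^t}$ rather than pulling back the vanishing of $w_2(G_{2^t,3})$ from \cite{BartikKorbas}.
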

\begin{proof}
(a) We know that $Sq^1(a_{2^t-1})\in H^{2^t}(\widetilde G_{2^t,3})$, and by Corollary \ref{B}, all basis elements in $H^{2^t}(\widetilde G_{2^t,3})$ are of the form $\widetilde w_2^b\widetilde w_3^c$, i.e., $H^{2^t}(\widetilde G_{2^t,3})\subset\im p^*$.

(b) Since $Sq^2(a_{2^t-1})\in H^{2^t+1}(\widetilde G_{2^t,3})$, $Sq^2(a_{2^t-1})$ is uniquely expressed as a sum of elements of $B$ (from Corollary \ref{B}) with dimension equal to $2^t+1$. Therefore,
\begin{equation}\label{Sq2}
Sq^2(a_{2^t-1})=\beta a_{2^t-1}\widetilde w_2+W,
\end{equation}
for some $\beta\in\mathbb Z_2$ and $W\in\im p^*$. It suffices to show that $\beta=0$.

Observe the squaring operation $Sq^2:H^{3\cdot2^t-11}(\widetilde G_{2^t,3})\rightarrow H^{3\cdot2^t-9}(\widetilde G_{2^t,3})$. Since the dimension of the manifold $\widetilde G_{2^t,3}$ is $3\cdot2^t-9$, this squaring operation is multiplication with the Wu class $v_2\in H^2(\widetilde G_{2^t,3})$. To compute this Wu class, we first use \cite[Theorem 11.14]{MilnorSt}, and conclude that for the second Stiefel--Whitney class of (the tangent bundle over) the manifold $\widetilde G_{2^t,3}$ we have
\[w_2(\widetilde G_{2^t,3})=Sq^2(v_0)+Sq^1(v_1)+v_2=v_2\]
(because $v_1\in H^1(\widetilde G_{2^t,3})=0$). Now, the second Stiefel--Whitney class $w_2(G_{2^t,3})$ of the ("unoriented") Grassmannian $G_{2^t,3}$ vanishes by \cite[Theorem 1.1]{BartikKorbas}. It is well known that the pullback of the tangent bundle over $G_{2^t,3}$ via $p$ is the tangent bundle over $\widetilde G_{2^t,3}$, so we obtain
\[v_2=w_2(\widetilde G_{2^t,3})=p^*w_2(G_{2^t,3})=p^*(0)=0.\]
Therefore, $Sq^2:H^{3\cdot2^t-11}(\widetilde G_{2^t,3})\rightarrow H^{3\cdot2^t-9}(\widetilde G_{2^t,3})$ is the zero map.

By Corollary \ref{B}, (if $t\geq3$) $H^{3\cdot2^t-11}(\widetilde G_{2^t,3})=\mathbb Z_2\langle a_{2^t-1}\widetilde w_2^{2^{t-2}-2}\widetilde w_3^{2^{t-1}-2}\rangle$ and $H^{3\cdot2^t-9}(\widetilde G_{2^t,3})=\mathbb Z_2\langle a_{2^t-1}\widetilde w_2^{2^{t-2}-1}\widetilde w_3^{2^{t-1}-2}\rangle$, and we compute:
\begin{align*}
0=&\,Sq^2(a_{2^t-1}\widetilde w_2^{2^{t-2}-2}\widetilde w_3^{2^{t-1}-2})\\
=&\,Sq^2(a_{2^t-1})\widetilde w_2^{2^{t-2}-2}\widetilde w_3^{2^{t-1}-2}+Sq^1(a_{2^t-1})Sq^1(\widetilde w_2^{2^{t-2}-2}\widetilde w_3^{2^{t-1}-2})\\
&+a_{2^t-1}Sq^2(\widetilde w_2^{2^{t-2}-2}\widetilde w_3^{2^{t-1}-2}).
\end{align*}
Since Steenrod squares are cohomology operations, they map $\im p^*$ to $\im p^*$, and we conclude that the second summand is zero by the part (a) of this lemma and Proposition \ref{propimp*}(b). Continuing the calculation, by (\ref{Sq2}) and Lemma \ref{Sq1Sq2}(b) we get
\begin{align*}
0=&\,(\beta a_{2^t-1}\widetilde w_2+W)\widetilde w_2^{2^{t-2}-2}\widetilde w_3^{2^{t-1}-2}\\
&+a_{2^t-1}\Big((2^{t-2}-2+2^{t-1}-2)\widetilde w_2^{2^{t-2}-1}\widetilde w_3^{2^{t-1}-2}+\mbox{$\binom{2^{t-2}-2}{2}$}\widetilde w_2^{2^{t-2}-4}\widetilde w_3^{2^{t-1}}\Big).
\end{align*}
For the same reason as above, $W\cdot\widetilde w_2^{2^{t-2}-2}\widetilde w_3^{2^{t-1}-2}=0$, and $\widetilde w_3^{2^{t-1}}=0$ by Lemma \ref{visina}. This means that
\[0=\beta a_{2^t-1}\widetilde w_2^{2^{t-2}-1}\widetilde w_3^{2^{t-1}-2} \quad\mbox{ in } H^{3\cdot2^t-9}(\widetilde G_{2^t,3})=\mathbb Z_2\langle a_{2^t-1}\widetilde w_2^{2^{t-2}-1}\widetilde w_3^{2^{t-1}-2}\rangle,\]
implying $\beta=0$.
\end{proof}

Finally, we can prove our main result.

\noindent{\bf Proof of Theorem \ref{thm1}.} By (\ref{H*}) we know that the cohomology algebra $H^*(\widetilde G_{2^t,3})$ is isomorphic to the quotient $\mathbb Z_2[\widetilde w_2,\widetilde w_3,a_{2^t-1}]/(g_{2^t-2},g_{2^t-1},a_{2^t-1}^2-Pa_{2^t-1})$. Here, $P$ is any polynomial in $\widetilde w_2$ and $\widetilde w_3$ with the property $a_{2^t-1}^2=Pa_{2^t-1}$ in $H^*(\widetilde G_{2^t,3})$. If we show that $a_{2^t-1}^2=0$ in $H^*(\widetilde G_{2^t,3})$, we will have
\begin{align*}
H^*(\widetilde G_{2^t,3})&\cong\frac{\mathbb Z_2[\widetilde w_2,\widetilde w_3,a_{2^t-1}]}{(g_{2^t-2},g_{2^t-1},a_{2^t-1}^2)}\cong\frac{\mathbb Z_2[\widetilde w_2,\widetilde w_3]}{(g_{2^t-2},g_{2^t-1})}\otimes_{\mathbb Z_2}\frac{\mathbb Z_2[a_{2^t-1}]}{(a_{2^t-1}^2)}\\
&=\frac{\mathbb Z_2[\widetilde w_2,\widetilde w_3]}{(g_{2^t-2},g_{2^t-1})}\otimes_{\mathbb Z_2}\Lambda_{\mathbb Z_2}(a_{2^t-1}),
\end{align*}
and this is what we are supposed to prove. So, it suffices to show that $a_{2^t-1}^2=0$ in $H^*(\widetilde G_{2^t,3})$.

By Corollary \ref{B}, the cohomology class $a_{2^t-1}^2\in H^{2\cdot 2^t-2}(\widetilde G_{2^t,3})$ is a linear combination of the classes
\[a_{2^t-1}\widetilde w_2^{2^{t-1}-5}\widetilde w_3^3,\, a_{2^t-1}\widetilde w_2^{2^{t-1}-8}\widetilde w_3^5, \ldots ,\, a_{2^t-1}\widetilde w_2^{2^{t-1}-2-3k}\widetilde w_3^{2k+1}, \ldots\]
and so on, as long as $2^{t-1}-2-3k\geq0$, i.e., $k\leq\frac{2^{t-1}-2}{3}$. Namely, the cohomological dimension of all of these classes is $2\cdot 2^t-2$, and all of them belong to $B$. Indeed, if $2^{t-1}-2-3k\geq2^{t-1}-2^i$ and $2k+1\geq2^i-1$ for some $i\in\{0,1,\ldots,t-1\}$, then $2^i\geq3k+2$ and $2^i\leq2k+2$, and this is not possible for $k\geq1$ (the monomial $a_{2^t-1}\widetilde w_2^{2^{t-1}-2}\widetilde w_3$ also belongs to $H^{2\cdot 2^t-2}(\widetilde G_{2^t,3})$, but it is not in $B$ -- this monomial is divisible by the leading monomial of $f_1\in F$). Therefore,
\[a_{2^t-1}^2=\sum_{k=1}^{\lfloor\frac{2^{t-1}-2}{3}\rfloor}\lambda_ka_{2^t-1}\widetilde w_2^{2^{t-1}-2-3k}\widetilde w_3^{2k+1},\]
for some (uniquely determined) $\lambda_k\in\mathbb Z_2$. We want to prove that $\lambda_k=0$ for all $k$.

By the Cartan formula, $Sq^1(a_{2^t-1}^2)=0$, and this is an equality in $H^{2\cdot 2^t-1}(\widetilde G_{2^t,3})$. Since $2\cdot 2^t-1>2\cdot 2^t-8$, Proposition \ref{propimp*}(b) tells us that every class in $H^{2\cdot 2^t-1}(\widetilde G_{2^t,3})\cap\im p^*$ vanishes. Now we use Lemmas \ref{Sq1Sq2a}(a) and \ref{Sq1Sq2}(a) to calculate:
\begin{align*}
0&=Sq^1(a_{2^t-1}^2)=\sum_{k=1}^{\lfloor\frac{2^{t-1}-2}{3}\rfloor}\lambda_kSq^1(a_{2^t-1}\widetilde w_2^{2^{t-1}-2-3k}\widetilde w_3^{2k+1})\\
&=\sum_{k=1}^{\lfloor\frac{2^{t-1}-2}{3}\rfloor}\lambda_k\Big(Sq^1(a_{2^t-1})\widetilde w_2^{2^{t-1}-2-3k}\widetilde w_3^{2k+1}+a_{2^t-1}Sq^1(\widetilde w_2^{2^{t-1}-2-3k}\widetilde w_3^{2k+1})\Big)\\
&=\sum_{k=1}^{\lfloor\frac{2^{t-1}-2}{3}\rfloor}\lambda_k(2^{t-1}-2-3k)a_{2^t-1}\widetilde w_2^{2^{t-1}-3-3k}\widetilde w_3^{2k+2}.\\
\end{align*}
Since $k\geq1$, there is no $i\in\{0,1,\ldots,t-1\}$ with the properties $2^{t-1}-3-3k\geq2^{t-1}-2^i$ and $2k+2\geq2^i-1$, i.e., $2^i\geq3k+3$ and $2^i\leq2k+3$. This means that $a_{2^t-1}\widetilde w_2^{2^{t-1}-3-3k}\widetilde w_3^{2k+2}\in B$ for all $k$, and since these are linearly independent (by Corollary \ref{B}), we can conclude that if $k$ is odd, then $\lambda_k=0$.

Now (substituting $2j$ for $k$) we have
\[a_{2^t-1}^2=\sum_{j=1}^{\lfloor\frac{2^{t-2}-1}{3}\rfloor}\lambda_{2j}a_{2^t-1}\widetilde w_2^{2^{t-1}-2-6j}\widetilde w_3^{4j+1}.\]
Again by the Cartan formula,
\[Sq^2(a_{2^t-1}^2)=\big(Sq^1(a_{2^t-1})\big)^2\in H^{2\cdot 2^t}(\widetilde G_{2^t,3})\cap\im p^*=0\]
(Lemma \ref{Sq1Sq2a}(a) and Proposition \ref{propimp*}(b)), so according to Lemmas \ref{Sq1Sq2} and \ref{Sq1Sq2a}:
\begin{align*}
0&=Sq^2(a_{2^t-1}^2)=\sum_{j=1}^{\lfloor\frac{2^{t-2}-1}{3}\rfloor}\lambda_{2j}Sq^2(a_{2^t-1}\widetilde w_2^{2^{t-1}-2-6j}\widetilde w_3^{4j+1})\\
&=\sum_{j=1}^{\lfloor\frac{2^{t-2}-1}{3}\rfloor}\lambda_{2j}a_{2^t-1}Sq^2(\widetilde w_2^{2^{t-1}-2-6j}\widetilde w_3^{4j+1})\\
&=\sum_{j=1}^{\lfloor\frac{2^{t-2}-1}{3}\rfloor}\lambda_{2j}a_{2^t-1}\Big(\widetilde w_2^{2^{t-1}-1-6j}\widetilde w_3^{4j+1}+\mbox{$\binom{2^{t-1}-2-6j}{2}$}\widetilde w_2^{2^{t-1}-4-6j}\widetilde w_3^{4j+3}\Big)\\
&=\sum_{j=1}^{\lfloor\frac{2^{t-2}-1}{3}\rfloor}\Big(\lambda_{2j}a_{2^t-1}\widetilde w_2^{2^{t-1}-1-6j}\widetilde w_3^{4j+1}+\lambda_{2j}\mbox{$\binom{2^{t-1}-2-6j}{2}$}a_{2^t-1}\widetilde w_2^{2^{t-1}-4-6j}\widetilde w_3^{4j+3}\Big).\\
\end{align*}
The monomials $a_{2^t-1}\widetilde w_2^{2^{t-1}-1-6j}\widetilde w_3^{4j+1}$ and $a_{2^t-1}\widetilde w_2^{2^{t-1}-4-6j}\widetilde w_3^{4j+3}$, for various $j$, are pairwise different elements of the additive basis $B$ (using the fact $j\geq1$, one can check this in a similar way as above), and finally, we obtain that $\lambda_{2j}=0$ for all $j$.
\qed

\bibliographystyle{amsplain}

\begin{thebibliography}{10}


\bibitem{BartikKorbas}
{V.\ Bart\'ik and J.\ Korba\v s}, \textit{Stiefel-Whitney characteristic classes and parallelizability of Grassmann manifolds}, Rend.\ Circ.\ Mat.\ Palermo
(2) {\bf 33} (Suppl. 6) (1984) 19-29.
%
\bibitem{BasuChakraborty}
{S.\ Basu and P.\ Chakraborty}, \textit{On the cohomology ring and upper characteristic rank of Grassmannian
of oriented $3$-planes}, J.\ Homotopy Relat.\ Struct.\ {\bf 15} (2020) 27-60.
%
\bibitem{Becker}
{T.\ Becker and V.\ Weispfenning}, \textit{Gr\"obner Bases: A Computational Approach to Commutative
Algebra}, Graduate Texts in Mathematics, Springer-Verlag, New York (1993).
%
\bibitem{Borel}
{ A.\ Borel}, \textit{La cohomologie mod 2 de certains espaces homog\`enes}, Comm.\ Math.\ Helv.\ {\bf 27} (1953) 165--197.
%
\bibitem{Fukaya}
{ T.\ Fukaya}, \textit{Gr\"obner bases of oriented Grassmann manifolds}, Homol.\ Homotopy Appl.\ {\bf 10:2} (2008) 195--209.
%
\bibitem{Jaworowski}
{J.\ Jaworowski}, \textit{An additive basis for the cohomology of real Grassmannians}, Lecture Notes in Math.\ {1474}, Springer-Verlag, Berlin (1991) 231--234.
%
\bibitem{Korbas:ChRank}
{J.\ Korba\v s}, \textit{The characteristic rank and cup-length in oriented Grassmann manifolds}, Osaka J.\ Math.\  {\bf 52} (2015), 1163--1172.
%
\bibitem{KorbasRusin:Palermo}
{J.\ Korba\v s and T.\ Rusin,} \textit{A note on the $\mathbb Z_2$-cohomology algebra of oriented Grassmann manifolds}, Rend.\ Circ.\ Mat.\ Palermo, II.\ Ser {\bf 65} (2016), 507--517.
%
\bibitem{MilnorSt}
J.\ W.\ Milnor, J.\ D.\ Stasheff, \textit{Characteristic Classes}, Ann.\ of Math.\ Studies \textbf{76}, Princeton University Press, New Jersey (1974).


\end{thebibliography}

\end{document}